%% file: main.tex
\input{preamble.tex}

\begin{document}

\title{
    \huge{A framework for spanning forest identities}
}
\author{Kai Choi, Rowan Yeats\thanks{KC was supported by an NSERC USRA. RY thanks Melanie Fraser for many useful discussions.  RY is partially supported by an NSERC Discovery grant and the Canada Research Chairs program.}}

\maketitle

\begin{abstract}
Identities of spanning forest polynomials can come from determinantal identities such as the classical Dodgson identity. They can also be useful in quantum field theory calculations, particularly the quadratic ones.  To better understand the structure of spanning forest polynomial identities, we look at when identities on a given graph are generic for all graphs, we ask about dimensions of spaces of spanning forest identities indexed by certain parameters, and we set up a framework that we hope will be helpful for further progress in these directions. As part of this framework, we identify a family of maps naturally given by the all-minors matrix-tree theorem, and we show that their ranks are counted by the Narayana numbers via explicit construction of bases.
\end{abstract}

\input{intro.tex}

\input{sf-identities.tex}

\input{mtt-det-identities.tex}

\input{narayana.tex}

\appendix
\input{muir-pf.tex}
\bibliographystyle{plain}
\bibliography{main}

\end{document}

%% file: preamble.tex
\documentclass[11et, letterpaper]{article}
\usepackage[left=2.5cm,right=2.5cm, top=2.5cm,bottom=2.5cm,bindingoffset=0cm]{geometry}
\usepackage{indentfirst}
\usepackage{graphicx} 
\usepackage{amsmath}
\usepackage{amssymb}
\usepackage{amsthm}
\usepackage{xcolor}
\usepackage{enumitem}
\usepackage{mathtools}
\usepackage{nameref}
\usepackage{titlesec}
\usepackage{bbold}
\usepackage{tikz-cd}
\usepackage{caption}
\usepackage{subcaption}
\usepackage[utf8]{inputenc}
\usepackage{bm}

\usepackage{hyperref}
\hypersetup{
    colorlinks=true,
    linkcolor=blue,
    filecolor=magenta,      
    urlcolor=cyan,
    pdftitle={Overleaf Example},
    pdfpagemode=FullScreen,
    }
\newcommand{\mb}[1]{\mathbb{#1}}
\newcommand{\mc}[1]{\mathcal{#1}}

\newcommand{\eps}{\varepsilon}

\DeclarePairedDelimiter{\brdelim}{\{}{\}}
\DeclarePairedDelimiter{\prdelim}{(}{)}

\DeclarePairedDelimiter{\abdelim}{|}{|}

\newcommand{\br}{\brdelim*}
\newcommand{\pr}{\prdelim*}

\newcommand{\ab}{\abdelim*}

\DeclareMathOperator{\im}{im}
\DeclareMathOperator{\rank}{rank}
\DeclareMathOperator{\spann}{span}

\newtheorem{theorem}{Theorem}[section]
\newtheorem{lemma}[theorem]{Lemma}
\newtheorem{proposition}[theorem]{Proposition}
\newtheorem{corollary}[theorem]{Corollary}

\theoremstyle{definition}
\newtheorem{definition}[theorem]{Definition}
\newtheorem{remark}[theorem]{Remark}
\newtheorem{question}[theorem]{Question}

\newtheorem{examplex}[theorem]{Example}
\newenvironment{example}
  {\pushQED{\qed}\examplex}
  {\popQED\endexamplex}

  \numberwithin{equation}{section}

%% file: intro.tex
\section{Introduction}

The classical Dodgson identity is a quadratic determinantal identity popularized by Dodgson in his condensation algorithm \cite{Dodgson}. Specifically, we have
\[
 \det M \det M(\{1,2\}|\{1,2\}) = \det M(\{1\}| \{1\}) \det M(\{2\}| \{2\}) - \det M(\{1\}| \{2\}) \det M(\{2\}| \{1\}),
 \]
 where $M$ is a square matrix with at least two rows and columns, and where $M(S|T)$ is the matrix $M$ with rows indexed by $S$ and columns indexed by $T$ removed.
 If $M$ is the Laplacian matrix of a graph or digraph (or a similar matrix defined from a graph), then this identity can be reinterpreted as a quadratic identity of spanning forests of the graph, as has been studied from a few different perspectives; see \cite{by, ch, fraser}.
  
One of the contexts in which this identity is important is in parametric Feynman integration in quantum field theory \cite{br}.  When calculating certain integrals known as Feynman periods of scalar $\phi^4$ Feynman graphs, the Dodgson identity allows us to rewrite the denominator at the second integration in a way that permits the next integration to proceed. Likewise, to perform subsequent integration steps, it is useful to consider other quadratic identities of minors of the expanded Laplacian matrix \cite{Ybook}, a variant on the Laplacian matrix of a graph. As one of us showed with Francis Brown \cite{by}, these minors can be written as signed sums of \emph{spanning forest polynomials} (defined in Section~\ref{sec:sf-identities}), yielding quadratic spanning forest polynomial identities.  

In view of this, we are interested in better understanding the space of quadratic spanning forest polynomial identities.  Aside from the quantum field theory application, we think this is a natural combinatorial question to investigate.  Important questions that are not in general understood include: what is the dimension of the space of spanning forest polynomial identities of a given degree for a given graph?  Do all identities of spanning forest polynomials come from determinantal identities?  Our spanning forest polynomials are defined in terms of a set partition of a subset of the vertices of the graph, so an important parameter will be the number of \emph{marked vertices} from which the set partition must be taken.  Restricting for the moment to complete graphs (other graphs can be obtained by setting some edge variables to zero), as the number of vertices of the graph increases but the number of marked vertices remains fixed, at what point is the graph large enough to be generic in the sense that all quadratic spanning forest polynomial identities of that graph are identities of all graphs with the same number of marked vertices?

Previous work of one of us with Melanie Fraser \cite{fy} showed that the space of quadratic spanning forest polynomials involving $4$ marked vertices, each of whose terms is a product of a spanning forest polynomial whose set partition has 2 parts and one whose set partition has $m-1$ parts, has dimension $m(m-2)$.  This generalized results of and proved a conjecture of the same one of us with Aleksandar Vlasev \cite{vy}.  The fact that the total number of parts between both partitions sums to $m+1$ is inspired by the quantum field theory context; even with that restriction, all other cases remain unknown.  Numbers of parts with other sums are also mathematically interesting.

These questions contrast somewhat with the usual generalizations of the Dodgson identity in the literature (see \cite{bs}, \cite{dennis}, and references therein) where the degree of the identity go up as the number of removed rows and columns (or equivalently, the number of marked vertices) goes up (one example which does not is \cite{Williamson}, which is closely related to the above mentioned quadratic results of \cite{fy}).  The growth of the degree with the number of marked vertices gives the more usual generalizations a somewhat different flavour and also leads to asking about generalizations of other degrees that do not grow with the number of marked vertices.  Other degrees are also of interest to physics when we look at Feynman integrals outside the limited context of Feynman periods in scalar $\phi^4$ theory, but for a given problem, the degree remains fixed while the number of marked vertices grows at each integration step.

In this paper, we show that the complete graph on the marked vertices is not generally large enough to be generic for quadratic spanning forest polynomial identities using an explicit counterexample, contrary to expectations from previous work. Also, we give an algebraic set-up that is useful for reasoning about spanning forest identities. In particular, we define a family of \emph{MTT maps} that interpret graph Laplacian minor identities as spanning forests identities; such maps are useful to consider when investigating whether all spanning forest identities can be deduced from determinantal identities of Laplacian matrices. We then give explicit bases for the degree-1 MTT maps and show that the ranks of these maps are counted by the Narayana numbers.

\subsection{Set-up and first few Feynman period integrations}

Throughout, we will fix a field $F$ of characteristic zero and work with simple undirected graphs $G$ with vertex set $[n] = \br{1, 2, \ldots, n}$.  We often fix some $m \le n$ and call the vertices $[m] \subseteq [n]$ of $G$ \emph{marked vertices}.  The marked vertices will be used to index the spanning forest polynomials and the minors of the graph Laplacian, which are the key objects of study.

Given a graph $G$ on $[n]$, its {\em Laplacian matrix} is the $n$-by-$n$ matrix $L_G$ with entries in $F[E(G)]$ given by
\begin{align*}
    (L_G)_{ij} &= \begin{cases}
        x_{ij} & \text{if } i \neq j \text{ and } ij \in E(G) \\
        0 & \text{if } i \neq j \text{ and } ij \not\in E(G) \\
        -\sum_{ik \in E(G)} x_{ik} & \text{if } i = j,
    \end{cases} \tag{where $i, j \in [n]$}
\end{align*}
where the $x_{ij}$ are independent indeterminates, one for each edge of $G$.

Note that $L_G$ is not of full rank since every row and column sums to $0$ by construction.  However, for $G$ connected, $\rank(L_G)=n-1$. Furthermore, the matrix-tree theorem (see \cite{ch} and \cite{Matousek}) tells us that $\det(L_G(1|1)) = \sum_{T}\prod_{e\in T}x_e$, where the sum is over spanning trees of $G$; this gives a first example of the kind of connection between matrices and polynomials that we work with.  

\begin{example}\label{eg feynman}
  Let $\Psi_G = \sum_{T}\prod_{e\not\in T}x_e$ where the sum is over spanning trees of $G$. $\Psi_G$ can be obtained from $\det(L_G(1|1))$ by inverting the variables and then multiplying by the product of all variables, or by working with the (matroidal) dual.
  
  In the quantum field theory context, we are interested in
  \[
  I_G = \int_{x_e\geq 0} \frac{dx_1\cdots dx_{n-1}}{\Psi_G^2|_{x_n=1}}
  \]
  when it converges.  Integrating $x_1$ and using contraction-deletion properties of $\Psi_G$, we get
  \[
  I_G = \int_{x_e\geq 0} \frac{dx_2\cdots dx_{n-1}}{\Psi_{G/1}\Psi_{G\backslash 1}|_{x_n=1}}.
  \]
  Similarly, integrating $x_2$, where we take edge $2$ to share a vertex with edge $1$, gives
  \[
  I_G = \int_{x_e\geq 0} \frac{dx_3\cdots dx_{n-1}}{(\Psi_{G/12}\Psi_{G\backslash 1/2}-\Psi_{G\backslash 2/1}\Psi_{G\backslash 12})|_{x_n=1}}.
  \]
  Here, something special occurs.  The denominator can be rewritten as the square of a new polynomial $\Psi_G^{1,2}$ which cannot be obtained simply from contraction and deletion but which is closely related to the spanning forest polynomials that we define in the next section.  The identity that permits this rewriting comes down to the same Dodgson identity that we opened with.
  \[
  I_G =  \int_{x_e\geq 0} \frac{dx_3\cdots dx_{n-1}}{(\Psi_G^{1,2})^2|_{x_n=1}}
  \]
  can then be integrated obtaining further spanning forest polynomials in the denominator and logarithms in the numerator.  This can always be continued to five integrations, and for suitably nice graphs and edge orders, this continues further.  Quadratic identities for these polynomials continue to be useful for rewriting and simplfying the denominators, which motivates our interest in them.  See \cite{br} and \cite{by} for details.  
\end{example}

\subsection{Statement on AI usage}

The authors did not use AI to prepare this paper, the bulk of the paper's contents having been developed between May and August 2025. We take full responsibility for the correctness of the material presented.

%% file: sf-identities.tex
\section{Spanning forest identities}\label{sec:sf-identities}

In this section, we introduce identities of spanning forest polynomials and formulate this project's driving questions. 

\subsection{Set partitions and spanning forest polynomials}

A {\em partial set partition} of $[m]$ is a collection $A = \{A_1, \ldots, A_k\}$ of disjoint subsets of $[m]$, where each $A_i$ is called a {\em block} of $A$. Given partial set partitions $A$ and $B$ of $[m]$, we say that $B$ {\em extends} $A$ (denoted $B \succcurlyeq A$) if they have the same number of blocks and each block of $B$ contains some block of $A$.
$A$ is a {\em (full) set partition} if it is a partial set partition and the union of its blocks is $[m]$.
The set of all full set partitions of $[m]$ is denoted $\Pi_{m}$, and the set of all full set partitions of $[m]$ with $k$ blocks is denoted $\Pi_{m,k}$.

Following \cite{fy}, it is convenient to notate to notate a partial set partition $A$ of $[m]$ with $k$ blocks by $(a_1, a_2, \ldots, a_{m})$ with $a_i \in [k]\cup\{-\}$, where $a_i=j$ indicates that $i$ is in is in block $j$ for $j\in [k]$ or is not in a block if $j=-$.  Since we work with unordered partial set partitions, permuting the labels in $[k]$ does not change the partial set partition.

\begin{example}
    Here are some examples of (partial) set partitions of $[4]$:
    \begin{align*}
        A &= (1,-,2,-) = \br{\br{1},\br{3}} \\
        B &= (1,-,2,1) = \br{\br{1,4},\br{3}} \\
        C &= (2,-,1,2) = \br{\br{1,4},\br{3}} \\
        D &= (1,2,2,1) = \br{\br{1,4},\br{2,3}}.
    \end{align*}
    Notice that $A \preccurlyeq B = C \preccurlyeq D$, and that $D\in\Pi_{4,2}$ is the only full set partition.
\end{example}

Given a graph $G$, a subgraph $F$ of $G$ is a {\em spanning forest} of $G$ if $V(F) = V(G)$, $E(F) \subseteq E(G)$, and $F$ does not contain any cycles. The connected components of $F$ are called its {\em trees}. A {\em spanning tree} is a spanning forest with one tree. A spanning forest $F$ is determined by its edge set, so when convenient we view $F$ as a subset of $E(G)$, noting that from this perspective, we implicitly have a single vertex tree of $F$ for each vertex of $G$ that is not incident to any edge of $F$. Because of this, if we let $F[E(G)]$ be the polynomial ring with independent indeterinates $x_e$ indexed by the edges $e$ of $G$, then each spanning forest (resp. spanning tree) $F$ of $G$ is determined by its {\em spanning forest monomial} (resp. {\em spanning tree monomial}) $\prod_{e \in F} x_e$ in $F[E(G)]$.

\begin{example}
    If $G$ is the complete graph on $[4]$, then $x_{12} x_{34}$ is a spanning forest monomial and $x_{12} x_{13} x_{34}$ is a spanning tree monomial.
\end{example}

Given a partial set partition $A$ of $[m]$ with $k$ blocks, we say a spanning forest $F$ of $G$ {\em agrees} with $A$ if $F$ has $k$ trees, each of which has a vertex set containing some block of $A$, and we define the {\em spanning forest polynomial of $A$} in $G$ to be the sum of all spanning forest monomials of spanning forests in $G$ agreeing with $A$. The {\em spanning tree polynomial} of $G$ (or the {\em Kirchhoff polynomial} of $G$) is the spanning forest polynomial of $(1,1,\ldots, 1)$ in $G$.  In Example~\ref{eg feynman} we worked with a dual version of the spanning tree polynomial where the monomials were built of the edges not in the tree instead of the edges in the tree.  This is because the physics naturally gives us this dual version of the polynomial, while mathematically the spanning tree polynomial itself is more natural.  It is easy to convert between them.  The spanning forest polynomial analogue mentioned in Example~\ref{eg feynman} is similarly dual to those we work with here.
Notice that given a partial set partition $A$ of $[m]$, the spanning forest polynomial of $A$ is the sum of the spanning forest polynomials of the full set partitions of $[m]$ extending $A$, which itself is the sum of the spanning forest polynomials of the full set partitions of $[n]$ extending $A$.

\begin{example}\label{eg:sf-identities:sfpoly}
    The spanning forest polynomial of $(1,-,-,2)$ in $K_4$ is
    \begin{align*}
        x_{12} x_{13} + x_{12} x_{23} + x_{13} x_{23} + x_{12} x_{34} + x_{13} x_{24} + x_{23} x_{24} + x_{23} x_{34} + x_{24} x_{34}.
    \end{align*}
    This is equal to the sum of the spanning forest polynomials of the four full set partitions that extend $(1,-,-,2)$ as illustrated below and giving a particular case of the observation immediately before this example. 
    \begin{center}
        \includegraphics[width=\linewidth]{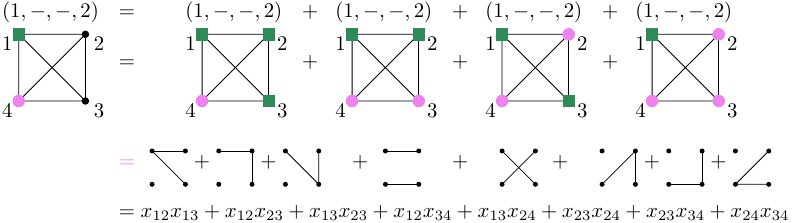}
    \end{center}
\end{example}

\subsection{Spanning forest identities}\label{subsec spanning forest identities}

We would now like to consider identities between these spanning forest polynomials, with some emphasis on quadratic identities.

Let $F[\Pi_m]$ be the polynomial ring in which the set partitions of $[m]$ are independent indeterminates. Then given a graph $G$ on $[n]$ with marked vertices $[m] \subseteq [n]$, we define the {\em interpretation map of set partitions of $[m]$ in $G$} to be the $F$-algebra homomorphism $\Phi_m^{(G)} : F[\Pi_m] \to F[E(G)]$ that maps a set partition $A$ to its spanning forest polynomial in $G$.  (We will not return to using $\Phi$ for the polynomials in Example~\ref{eg feynman}, so this reuse of notation should cause no confusion.)

\begin{example}\label{eg dodgson as set partitions}

  It turns out, see Example~\ref{eg:mtt-det-identities:delta}, that the Dodgson identity described in the introduction when applied to a suitable version of the graph Laplacian gives us that for any graph $G$ with three marked vertices 

\begin{align*}
    \Phi_3^{(G)}(1,1,1)\Phi_3^{(G)}(1,2,3) &= \Phi_3^{(G)}(-,1,2)\Phi_3^{(G)}(1,2,-) - \Phi_3^{(G)}(1,2,1)\Phi_3^{(G)}(1,2,1).
\end{align*}
We call this a \emph{quadratic spanning forest identity} since it is quadratic in the $\Phi$s.

In particular, if $G=K_3$ then this identity becomes
\begin{align*}
    (x_{12} x_{13} + x_{12} x_{23} + x_{13} x_{23}) (1) &= (x_{12} + x_{13})(x_{13} + x_{23}) - (x_{13})(x_{13}),
\end{align*}
which the reader can readily verify is true in $F[E(K_3)]$.  However, we are particularly interested in identities which, like this one, are true for all graphs with the required number of marked vertices, and so are general identities, rather than being specific to particular graphs.

By rewriting the partial set partitions in terms of the full set partitions that extend them, it suffices consider identities that only use full set partitions.  In this case, our identity becomes
\begin{align*}
  & \Phi_3^{(G)}(1,1,1)\Phi_3^{(G)}(1,2,3) \\
  & =  \left(\Phi_3^{(G)}(1,1,2) + \Phi_3^{(G)}(1,2,1)\right)\left(\Phi_3^{(G)}(1,2,1) + \Phi_3^{(G)}(1,2,2)\right) - \Phi_3^{(G)}(1,2,1)\Phi_3^{(G)}(1,2,1) \\
  & = \Phi_3^{(G)}(1,1,2)\Phi_3^{(G)}(1,2,1) + \Phi_3^{(G)}(1,1,2) \Phi_3^{(G)}(1,2,2) + \Phi_3^{(G)}(1,2,1)\Phi_3^{(G)}(1,2,2).
\end{align*}

Up to the symmetries of the problem this is the only general quadratic spanning forest polynomial identity for $m=3$ (see Section 3 of \cite{fy}).

For $m=4$ one can generalize this question to ask about identities relating $\Phi_4^{(G)}(1,1,1,1)\Phi_4^{(G)}(1,2,3,4)$ to linear combinations of terms for the form $\Phi_4^{(G)}(A)\Phi_4^{(G)}(B)$ where $A\in\Pi_{4,3}$ and $B\in \Pi_{4,2}$.  This was solved by one of us with Aleks Vlasev in \cite{vy} giving an eight dimensional space of solutions.  Since one side of each equation is always $\Phi_4^{(G)}(1,1,1,1)\Phi_4^{(G)}(1,2,3,4)$, it is more convenient to take differences of these equations to obtain identities simply for linear combinations of terms of the form $\Phi_4^{(G)}(A)\Phi_4^{(G)}(B)$ where $A\in\Pi_{4,3}$ and $B\in \Pi_{4,2}$.  This was the setting studied in \cite{fy} and generalized therein to $A\in \Pi_{m,m-1}$, $B\in \Pi_{m,2}$.  From there, we are motivated to consider quadratic identities in which for each term, one factor is a set partition of $[m]$ with $k$ blocks, and the other is a set partition of $[m]$ with $\ell$ blocks.

\end{example}

With this notation and motivation in mind, 
we call $X_m^{(G)} := \ker \Phi_m^{(G)}$ the {\em space of spanning forest identities on $G$}. In the case where $G = K_n$, we write $X_m^{(n)}$ and $\Phi_m^{(n)}$ for $X_m^{(K_n)}$ and $\Phi_m^{(K_n)}$.  We lose no generality by working with complete graphs; to recover a non-complete graph on the same vertex, simply set the variables for the edges that are not present to $0$.  By that same reasoning, we have that $X_m^{(n)} \subseteq X_m^{(G)}$ for any $G$ on $[n]$ with marked vertices $[m]$.
The space of all spanning forest identities which are general for graphs with marked vertices $[m]$ is thus
\begin{align*}
    X_m &:= \bigcap_{G} X_m^{(G)} = \bigcap_{n \ge m} X_m^{(n)}.
\end{align*}

We can decompose the act of interpreting a partial set partition of $[m]$ in $K_n$ into two processes: extending it to a formal sum of full set partitions of $[n]$, and interpreting the terms of this formal sum as spanning forest polynomials in $K_n$.  That is, $\Phi_m^{(n)} = \phi_n \circ \iota_m^{(n)}$, where we use the notation $\phi_n = \Phi_n^{(n)}$ and where $\iota_m^{(n)} : F[\Pi_m] \to F[\Pi_n]$ is the {\em extension map of set partitions} defined by sending a set partition $A \in \Pi_m$ to $\sum_{\tilde{A} \succcurlyeq A} \tilde{A}$, where this sum is taken over all $\tilde{A} \in \Pi_n$ extending $A$ viewed as a partial set partition of $[n]$.

\begin{example}
    We can interpret the set partition $(1,2)$ of $[2]$ as a spanning forest polynomial in $K_4$ as follows:
    \begin{align*}
        \Phi_2^{(4)}(1,2) &= \phi_4(1,2,1,1) + \phi_4(1,2,1,2) + \phi_4(1,2,2,1) + \phi_4(1,2,2,2) \\
                            &= (x_{13} x_{14} + x_{13} x_{34} + x_{14} x_{34}) + (x_{13} x_{24}) + (x_{14} x_{23}) + (x_{23} x_{24} + x_{23} x_{34} + x_{24} x_{34}).
    \end{align*}
\end{example}

As we are interested in keeping track of the number of parts in the partitions, for each multiset $M$ of elements of $[m]$, define
\begin{align*}
  \mc{A}_{m,M} &:= \spann_F\pr{\prod_{a\in M}\Pi_{m,a}},
\end{align*}
where we are using the pointwise product of sets $ST = \{st: s\in S, t\in T\}$.
Likewise define $X_{m,M}=X_m \cap \mc{A}_{m,M}$.  Note that this does make $F[\Pi_m]$ into a multigraded ring, but our maps are not graded nor is $X_{m,M}$ a graded ideal.  It is true, though, that $X_{m,M} = \bigcap_{n \ge m} \ker \Phi_{m,M}^{(n)}$. When $|M|$ is small we write the numbers of parts directly as subscripts (e.g., $\mathcal{A}_{m,k,\ell}$).

The motivating question, which we only take small steps towards, is the following.
\begin{question}\label{q:sf-identities:dim}
  Given parameters $(m,M)$, what is $\dim X_{m,M}$?
\end{question}

What was already known is the following:
\begin{theorem}[Fraser, Yeats '23; \cite{fy}]
    $\dim X_{m,2,m-1} = m(m-2)$ for all $m \ge 4$.
\end{theorem}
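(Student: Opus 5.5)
We would prove $\dim X_{m,2,m-1} = m(m-2)$ by a lower bound and an upper bound: exhibit $m(m-2)$ linearly independent general identities, and show that every general identity lies in their span.

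\textbf{Lower bound.} The $m=4$ case of \cite{vy} provides an eight-dimensional family of identities. Relabeling marked vertices gives the $S_m$-action, and we would transport these identities to $m$ marked vertices in two ways.
\begin{itemize}
\item[(i)] Enlarge the partitions by adding the remaining marked vertices as singleton blocks of the $(m-1)$-block factor, and merging them appropriately into the $2$-block factor.
\item[(ii)] Use determinantal (Dodgson-type) identities applied to minors of the Laplacian with the extra marked rows and columns removed, following the approach of Example~\ref{eg dodgson as set partitions}. This guarantees the identities hold for every $K_n$, so they lie in $X_m$.
\end{itemize}
We would then pick $m(m-2)$ of these, indexed for instance by an ordered pair $(i,j)$ with $i\neq j$ together with a restriction removing $m$ pairs. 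To prove independence, we would find for each chosen identity a distinguished monomial $\Phi(A)\Phi(B)$, with $A\in\Pi_{m,m-1}$ and $B\in\Pi_{m,2}$, that occurs in it and in no earlier identity. This gives a triangular coefficient matrix.

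\textbf{Upper bound.} Take a general identity $\sum c_{A,B}\,AB \in X_{m,2,m-1}$. Each $A\in\Pi_{m,m-1}$ is determined by its unique doubleton block $\{i,j\}$. We would evaluate the identity on well-chosen graphs, such as $K_n$ or sparse trees built on the marked vertices together with a few extra vertices. The goal is to extract linear constraints on the $c_{A,B}$ by comparing coefficients of specific forest monomials $\prod x_e$ in $\Phi(A)\Phi(B)$.
\begin{itemize}
\item A product of forests agreeing with $A$ and $B$ is essentially determined by the union multigraph. Monomials in which a doubleton path $i$--$j$ is isolated force relations among the $c_{\{i,j\},B}$ as $B$ varies over the $2^{m-1}-1$ two-block partitions.
\item These relations should reduce each fixed-$\{i,j\}$ slice to few free parameters, and cross-relations between slices then cut the total down to $m(m-2)$.
\end{itemize}
Equivalently, we would show that the rank of the map $\Phi_{m,2,m-1}^{(n)}$ restricted to $\mathcal{A}_{m,2,m-1}$ is at least $\binom m2(2^{m-1}-1)-m(m-2)$ for some fixed $n$, for example $n=m$ or $n=m+1$. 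We would do this by exhibiting that many product monomials with a triangular incidence pattern.

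\textbf{Main obstacle.} The hard part is the upper bound, since it needs a systematic combinatorial description of which monomials witness independence. Our first attempt would be induction on $m$. Setting the variables of all edges at vertex $m$ to zero, or contracting an edge to vertex $m$, relates $X_{m,2,m-1}$ to $X_{m-1,2,m-2}$ plus a controlled kernel. We would then show that the kernel of this restriction has dimension exactly $2m-3$, which is what the formula requires: $m(m-2)-(m-1)(m-3)=2m-3$. The base case $m=4$, with dimension $8$, comes from \cite{vy}.
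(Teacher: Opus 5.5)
The paper does not prove this statement; it quotes it from \cite{fy}, so there is no internal argument to match. Judged on its own, your proposal has a genuine gap. It is a plan in which every decisive step is deferred, and the one concrete mechanism you offer for the hard direction does not work.

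\textbf{Upper bound.} Your reformulation is correct. Since $X_{m,2,m-1}\subseteq X^{(n)}_{m,2,m-1}$, it suffices to show that $\Phi^{(n)}_{m,2,m-1}$ has rank at least $\binom{m}{2}(2^{m-1}-1)-m(m-2)$ for a single $n$. But exhibiting those independent images is the whole content of the theorem, and you do not do it. The induction you propose cannot supply it.
\begin{itemize}
\item If you set $x_{mv}=0$ for all $v$, vertex $m$ becomes isolated. The only surviving $B\in\Pi_{m,2}$ is $\{[m-1],\{m\}\}$, which becomes a spanning tree polynomial $T$, and the only surviving $A$ have $\{m\}$ as a block. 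The identity collapses to $T\cdot\sum_{A'}c_{A'\cup\{\{m\}\},\{[m-1],\{m\}\}}\Phi(A')=0$ with $A'\in\Pi_{m-1,m-2}$. This has shape $(1,m-2)$ and merely forces those coefficients to vanish.
\item If you contract $e=\{m-1,m\}$, i.e.\ take the top coefficient $[x_e^2]$, only terms survive in which the doubleton of $A$ is $\{m-1,m\}$ and $B$ puts $m-1,m$ in one block. You get $\Phi(\text{all singletons})\cdot\sum_B c_{A,B}\Phi(B/e)=0$, which again only kills coefficients: $c_{A,B}=0$ whenever $B$ keeps the doubleton of $A$ in one block.
\end{itemize}
Neither operation produces a map $X_{m,2,m-1}\to X_{m-1,2,m-2}$. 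So the claimed kernel of dimension $2m-3$ refers to no defined map; $2m-3$ is just $m(m-2)-(m-1)(m-3)$ read off from the answer. Even if you built a reduction that does land in shape $(2,m-2)$, bounding its kernel would be the entire difficulty.

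\textbf{Lower bound.} This is not established either.
\begin{itemize}
\item Route (i) is unjustified. Nothing shows that padding a $4$-vertex identity with singleton blocks in the $(m-1)$-block factor, and `merging appropriately' in the $2$-block factor, yields an identity valid on every $K_n$. Example~\ref{ex:sf-identities:5-3-4-not-generic} shows such transport cannot be taken for granted.
\item Route (ii) is the sound mechanism (Muir's law plus Remark~\ref{rmk giant diagram}), but it must be made precise. A single Dodgson identity $\delta(S\mid S)\,\delta(S\cup\{i,j\}\mid S\cup\{i,j\})-\cdots$ has terms with size multisets $\{s,s+2\}$ and $\{s+1,s+1\}$. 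Its image therefore never lies in $\mc{A}_{m,2,m-1}$ alone.
\item You need determinantal identities all of whose terms have sizes $\{2,m-1\}$. One source is differences of two expressions of $\delta(1\mid 1)\,\delta([m]\mid[m])$ as sums of products of a size-$2$ and a size-$(m-1)$ minor. Such expressions arise from Jacobi's complementary-minor theorem for $L(1\mid 1)$, Laplace-expanded along different rows, as in the differences described in Example~\ref{eg dodgson as set partitions}.
\item You must then name $m(m-2)$ such identities and prove that their $\tau_m$-images are independent. This cannot be read off on the determinantal side, because $\ker\tau_m$ is large: already $\tau_{m,k}$ has rank only $N(m,k)$ by Section~\ref{sec:narayana}. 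Distinct determinantal identities may therefore have dependent or even zero images.
\end{itemize}
`Ordered pairs $(i,j)$ with $m$ pairs removed' and an unspecified triangular pattern of distinguished monomials $AB$ do not yet constitute that argument.
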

In Example~\ref{eg dodgson as set partitions}, we saw just one identity relating set partitions with different numbers of parts and hence no identities within $X_{3,2,1}$ when $m=3$ rather than $3\cdot 1 = 3$ as the pattern in the theorem would suggest if it held for $m=3$.  The reason for this is that $m=3$ case has extra symmetries, and so what should be separate possibilities all wind up agreeing, see \cite{fy} Section 3.  The conventions of \cite{fy} differed from ours, in particular taking an ordered list of part sizes rather than a multiset, and with the conventions of that paper the theorem does hold for $m=3$ as well.

\subsection{Forest identity-genericity}\label{subsec genericity}

Fix parameters $(m,k,\ell)$. Notice that for $n \ge m$, $\dim X_{m,k,\ell}^{(n)}$ is the row nullity of a matrix which has for each $(A,B) \in \Pi_{m,k} \times \Pi_{m,\ell}$ a row containing the coefficients of the various edge monomials in $\Phi_m^{(n)}(AB)$.  In unpublished work of one of us with Melanie Fraser \cite{fynotes}, we computed these dimensions when $n=m$ in the cases given on the left hand side in Table~\ref{table:id-dim-table}.  At that time, we had expected that working with $K_m$ when we have $m$ marked vertices would be sufficiently generic.  However, as we have extended the calculations, shown on the right hand side of Table~\ref{table:id-dim-table}, we see that this is not true.   Some notation will help us discuss this further.

\input{id-dim-table.tex}

Taking care of how spanning forests of $K_n$ relate to spanning forests of $K_{n+1}$, the following proposition implies that $(X_{m,k,\ell}^{(n)})_{n \ge m}$ is a decreasing chain of subspaces and hence that $\dim X_{m,k,\ell} = \inf_{n \ge m} \dim X_{m,k,\ell}^{(n)}$.

\begin{proposition}\label{prop containment of Xs}
    $X_{m,M}^{(n)} \supseteq X_{m,M}^{(n+1)}$ for all multisets $M$ of elements of $[m]$ and all $n \ge m$.
\end{proposition}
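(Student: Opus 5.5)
Fix $P \in X_{m,M}^{(n+1)}$, so $\Phi_m^{(n+1)}(P) = 0$ in $F[E(K_{n+1})]$. The plan is to specialize the edge variables of $K_{n+1}$ so that $\Phi_m^{(n+1)}$ becomes $\Phi_m^{(n)}$, up to a harmless factor. Then the vanishing of $\Phi_m^{(n+1)}(P)$ will force the vanishing of $\Phi_m^{(n)}(P)$.

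The natural specialization is to set $x_{i,n+1}=0$ for all $i\in[n]$. This makes vertex $n+1$ isolated, which changes the number of trees by one and so mixes the block counts of different partitions. To avoid this, I will instead use the ring homomorphism $\sigma: F[E(K_{n+1})]\to F[E(K_n)]$ that sets $x_{i,n+1}=0$ for $2\le i\le n$, sets $x_{1,n+1}=1$, and fixes all other variables. Under $\sigma$, $K_{n+1}$ effectively becomes $K_n$ with a pendant edge of weight $1$ from vertex $1$ to vertex $n+1$.

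The key step is the following lemma: for every partial set partition $A$ of $[m]$ (in particular every $A\in\Pi_m$),
\[
\sigma\bigl(\Phi_m^{(n+1)}(A)\bigr)=\Phi_m^{(n)}(A).
\]
To prove it, take a spanning forest $F$ of $K_{n+1}$ whose monomial survives $\sigma$. Then the only edge of $F$ at vertex $n+1$, if any, is $\{1,n+1\}$.

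If $F$ contains $\{1,n+1\}$, deleting that edge gives a spanning forest $F'$ of $K_n$. $F'$ has the same number of trees as $F$, its monomial equals $\sigma$ of the monomial of $F$, and its trees contain the same blocks of $A$, since $n+1\notin[m]$ because $m\le n$.

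If $F$ does not contain $\{1,n+1\}$, then $n+1$ is an isolated vertex of $F$. This isolated tree contains no marked vertex, so $F$ cannot agree with $A$: each of its $k$ trees would have to contain one of the $k$ nonempty blocks of $A$. Hence these forests contribute nothing.

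Conversely, every forest of $K_n$ agreeing with $A$ lifts uniquely by adding the edge $\{1,n+1\}$. So $\sigma$ gives a bijection on the surviving monomials, and the lemma follows. The care needed at the blocks is the requirement that each tree contains a block; this is why an isolated unmarked vertex kills the term.

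Since $\sigma$, $\Phi_m^{(n)}$ and $\Phi_m^{(n+1)}$ are all $F$-algebra homomorphisms, the lemma gives $\sigma\circ\Phi_m^{(n+1)}=\Phi_m^{(n)}$ on all of $F[\Pi_m]$. Therefore
\[
\Phi_m^{(n)}(P)=\sigma\bigl(\Phi_m^{(n+1)}(P)\bigr)=\sigma(0)=0.
\]
Also $P\in\mc{A}_{m,M}$ is unchanged by the argument, so $P\in X_{m,M}^{(n)}$, which proves $X_{m,M}^{(n)}\supseteq X_{m,M}^{(n+1)}$.

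I expect the main obstacle to be choosing the right specialization. Setting all the new variables to zero fails because of the isolated vertex $n+1$, and the pendant-edge trick repairs this. Once that choice is made, the remaining verification is routine.
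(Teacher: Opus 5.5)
Your proof is correct, but it takes a different route from the paper.

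The paper substitutes $x_{i,n+1}\mapsto t$ for every $i\in[n]$ and makes two observations:
\begin{itemize}
\item $\theta(\Phi_m^{(n+1)}(A))$ has zero constant term, because every forest agreeing with $A$ must use an edge at $n+1$. This is also the real reason the naive specialization $x_{i,n+1}=0$ fails: it kills everything.
\item Its linear coefficient is $n\Phi_m^{(n)}(A)$, since the leaf $n+1$ can hang off any of the $n$ vertices.
\end{itemize}
It then extracts the $t^{d}$ coefficient with $d=|M|$ and concludes from $n^d\Phi_m^{(n)}(R)=0$.

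Your specialization $x_{1,n+1}\mapsto 1$, $x_{i,n+1}\mapsto 0$ for $i\ge 2$ pins the leaf to vertex $1$. This does two things at once: it kills the forests in which $n+1$ has degree at least two, and it turns the paper's $n$-to-$1$ correspondence into a bijection. The result is an exact factorization $\sigma\circ\Phi_m^{(n+1)}=\Phi_m^{(n)}$ of $F$-algebra homomorphisms.

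This gives you more than the statement asks for. You get $X_m^{(n+1)}\subseteq X_m^{(n)}$ for the whole inhomogeneous kernel, not just on each component $\mathcal{A}_{m,M}$. You also never divide by $n^d$, so characteristic zero is not needed.

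The paper's argument is symmetric in the vertices of $K_n$, but it pays for that in two ways. It needs every monomial of $\mathcal{A}_{m,M}$ to have the same degree $d$ in order to isolate a single power of $t$. It also needs $n^d$ to be invertible in $F$.
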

\begin{proof}
    Define the map
    \begin{align*}
        \theta : F[E(K_{n+1})] &\to F[E(K_n)][t] \\
        x_{i,j} &\mapsto x_{i,j} \tag{for $i,j \in [n]$ distinct} \\
        x_{i,n+1} &\mapsto t \tag{for $i \in [n]$}
    \end{align*}
    Given some $A \in \Pi_m$, consider the constant and linear terms of $\theta(\Phi_m^{(n+1)}(A))$ with respect to $t$. Since every spanning forest of $K_{n+1}$ agreeing with $A$ contains at least one edge incident to vertex $n+1$, the constant term is zero. Furthermore, notice that there is an $n$-to-1 surjective map between the spanning forests of $K_{n+1}$ agreeing with $A$ with exactly one edge incident to $n+1$ and the spanning forests of $K_n$ agreeing with $A$, given by removing the unique edge incident to $n+1$. Because of this, the coefficient of the linear term of $\theta(\Phi_m^{(n+1)}(A))$ is $n\Phi_m^{(n)}(A)$. Hence, letting $d := \ab{M}$, for any monomial $A^{(1)} \ldots A^{(d)}$ in $\mc{A}_{m,M}$, the coefficient of the degree-$d$ term of $\theta(\Phi_m^{(n+1)}(A^{(1)} \ldots A^{(d)}))$ is
    \begin{align*}
        [t^{d}] \theta\pr{\Phi_{m}^{(n+1)}(A^{(1)} \ldots A^{(d)})}
        = [t^1] \theta\pr{\Phi_{m}^{(n+1)}(A^{(1)})} \ldots [t^1] \theta\pr{\Phi_{m}^{(n+1)}(A^{(d)})}
        = n^d \Phi_{m}^{(n)}(A^{(1)} \ldots A^{(d)}).
    \end{align*}
    Thus, for any $R \in \ker \Phi_{m,M}^{(n+1)}$, since $R$ is a linear combination of these monomials, we have that
    \begin{align*}
        0
        = [t^d] \theta\pr{\Phi_{m,M}^{(n+1)}(R)}
        = n^d \Phi_{m,M}^{(n)}(R),
    \end{align*}
    so $R \in \ker \Phi_{m,M}^{(n)}$. Thus, $\ker \Phi_{m,M}^{(n)} \supseteq \ker \Phi_{m,M}^{(n+1)}$.
\end{proof}

Since $\dim X_{m,k,\ell}^{(n)}$ is a nonnegative integer for all $n \ge m$, there must exist some point at which the descending chain $(X_{m,k,\ell}^{(n)})_{n \ge m}$ stabilizes, suggesting the following notion.

\begin{definition}
    Given parameters $(m,k,\ell)$ and $n \ge m$, we say that $K_n$ is {\em forest identity-generic} with respect to our parameters if for all $n' \ge n$,
    \begin{align*}
        X_{m,k,\ell}^{(n)} &= X_{m,k,\ell}^{(n')}.
    \end{align*}
\end{definition}

Intuitively if $K_n$ is forest identity-generic with respect to parameters $(m,k,\ell)$ then every quadratic spanning forest identity with these parameters on $K_n$ is true for all $K_{n'}$ with $n'\geq n$, and hence by the proposition, also for all $G$ with $m$ marked vertices; that is, $K_n$ has no spurious identities.  If we knew what value of $n$ would give us forest identity genericity, then questions about these identities become finite calculations.

\begin{question}
    Given parameters $(m,k,\ell)$, what is the minimal value of $n$ for which $K_n$ is forest identity-generic?
\end{question}

Contrary to the original expectations of \cite{fynotes}, $K_m$ is not forest identity-generic for all $(m,k,\ell)$, as the bold blue numbers in Table \ref{table:id-dim-table} show.  Concretely, we have the following first example of this phenomenon.

\begin{example}\label{ex:sf-identities:5-3-4-not-generic}
    Fix parameters $(m,k,\ell) = (5,3,4)$ and define the set partitions
    \begin{align*}
        A_{21} &:= (1,2,1,2,3) \\
        A_{22} &:= (1,2,2,2,3) \\
        A_{23} &:= (1,1,2,2,3) \\
        A_{24} &:= (1,1,1,2,3) \\
        B_{5} &:= (1,2,3,2,4) \\
        B_{8} &:= (1,2,2,3,4) \\
        B_{9} &:= (1,1,2,3,4).
    \end{align*}
    Then if we let
    \begin{align*}
        R := A_{21} B_8 + A_{21} B_9 + A_{22} B_9 - A_{23} B_5 - A_{23} B_8 - A_{24} B_5,
    \end{align*}
    we have that $\phi_{5,3,4}(R) = 0$ but
    \begin{align*}
        \Phi_{5,3,4}^{(6)}(R) &= -x_{16} x_{24} x_{26} x_{36} x_{56} + x_{12} x_{26} x_{36} x_{46} x_{56}.
    \end{align*}
    This shows that $X_{5,3,4}^{(5)} \not\subseteq X_{5,3,4}^{(6)}$, so $K_5$ is not forest identity-generic given parameters $(m,k,\ell) = (5,3,4)$. This also shows that $\dim X_{5,3,4}^{(5)}$ (which, according to Table \ref{table:id-dim-table}, is 80) is a strict upper bound for $\dim X_{5,3,4}$.
\end{example}

The result of \cite{fy} quoted above gives that $K_m$ is forest identity-generic for $(m,2,m-1)$ as well as an exact formula for the dimension in that case. There are two quadratic cases where we can prove forest identity-genericity already for $K_m$ and these are the two where there are no identities.

\begin{proposition}\label{prop:sf-identities:generic}
  $\dim X_{m, 2, 2}^{(n)} = \dim X_{m, m-1, m-1}^{(n)} = 0$ for all $n\geq m$ and so in particular $K_m$ is forest identity-generic for $(m, 2, 2)$ and $(m, m-1, m-1)$.   
\end{proposition}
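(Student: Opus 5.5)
By Proposition~\ref{prop containment of Xs}, the chain $X_{m,k,\ell}^{(n)}$ is decreasing in $n$. So it suffices to show $X_{m,2,2}^{(m)} = 0$ and $X_{m,m-1,m-1}^{(m)}=0$, i.e.\ to work on $K_m$ with all vertices marked. Genericity is then immediate: every later space is also zero.

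\textbf{The case $(m,m-1,m-1)$.} On $K_m$, a set partition $A\in\Pi_{m,m-1}$ has exactly one block of size two, say $\{i,j\}$, and every other block is a singleton. A spanning forest with $m-1$ trees agreeing with $A$ has exactly one edge, and that edge must be $ij$. Hence $\phi_m(A)=x_{ij}$, and $A\mapsto x_{ij}$ is a bijection from $\Pi_{m,m-1}$ to the edge variables. The products $AB$ with $A,B\in\Pi_{m,m-1}$ (unordered, since $F[\Pi_m]$ is commutative) therefore map bijectively onto the distinct degree-two monomials $x_ex_f$, including the squares. These monomials are linearly independent, so the restricted map is injective and the kernel is $0$.

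\textbf{The case $(m,2,2)$.} For $A\in\Pi_{m,2}$ with blocks $S\sqcup T=[m]$, the polynomial $\phi_m(A)$ is $\tau(S)\tau(T)$, where $\tau$ denotes the spanning tree polynomial of the complete graph on a given vertex set. To recover $A$ from the image of a product, I would look for a distinguished monomial. A convenient choice is $\mathrm{in}(A)$, the lex-leading monomial of $\phi_m(A)$ under a fixed term order. Then the leading monomial of $\phi_m(A)\phi_m(B)$ is $\mathrm{in}(A)\,\mathrm{in}(B)$.

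It remains to show that this product determines the unordered pair $\{A,B\}$. If that holds, a triangularity argument finishes the proof. Suppose $R=\sum c_{AB}AB\neq 0$, and take the pair with the largest leading monomial among those with $c_{AB}\neq0$. That leading monomial cannot cancel in $\phi_m(R)$, so $\phi_m(R)\neq0$.

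\textbf{Recovering the pair from the leading product (main obstacle).} Using only the edge multiset of the product of two forests is too weak. Instead I would use a graph-theoretic recovery in place of pure lex order. Choose a term order in which the edges $1j$ are largest. Then the leading forest of $A$ is a star at $1$ on the block $S\ni 1$ together with a star at $\min T$ on $T$. The product $\mathrm{in}(A)\mathrm{in}(B)$ corresponds to an edge multiset. From it one reads off the edges at vertex $1$, of multiplicity $1$ or $2$, which gives the multiset $\{S_A\setminus 1,\, S_B\setminus 1\}$, where $S_A$ and $S_B$ are the blocks containing $1$. The remaining edges are stars centred at $\min T_A$ and $\min T_B$.

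One then has to check that these two stars can be separated and matched to $A$ and $B$. The multiset $\{S_A, S_B\}$ is known, but pairing it with the complements is automatic, since each complement is $[m]\setminus S$. The pair is therefore determined.

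I expect the care here to be in checking that $\mathrm{in}(A)$ really is this double star for the chosen order. For that I would pick a lex order on edges ordered by (smaller endpoint, then larger endpoint).

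If this leading-term bookkeeping proves delicate, a fallback is available. Set all variables not incident to vertex $1$ to suitable specializations. Then $\phi_m(A)$ becomes essentially $\prod_{s\in S\setminus 1}x_{1s}$ times a factor depending only on $T$, and one can argue injectivity of products of these factors directly.
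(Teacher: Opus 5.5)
Your reduction to $n=m$ via Proposition~\ref{prop containment of Xs} and your treatment of $(m,m-1,m-1)$ are correct and match the paper. The $(m,2,2)$ case, however, has a genuine gap: the leading monomial $\mathrm{in}(A)\,\mathrm{in}(B)$ does \emph{not} determine the unordered pair $\{A,B\}$. The edges at vertex $1$ only tell you, for each $v$, how many of $S_A,S_B$ contain $v$ (that is, $S_A\cap S_B$ and $S_A\triangle S_B$). They do not tell you how $S_A\triangle S_B$ splits between the two sets. Moreover, when $\min T_A=\min T_B=t$, the two stars at $t$ record $[v\in T_A]+[v\in T_B]=2-[v\in S_A]-[v\in S_B]$, which adds no information.

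Concretely, take $m=4$ and your order, e.g.\ $x_{12}>x_{13}>x_{14}>x_{23}>x_{24}>x_{34}$. Then $\phi_4(1,2,1,2)\,\phi_4(1,2,2,1)=x_{13}x_{24}\cdot x_{14}x_{23}$. Also $\phi_4(1,2,1,1)\,\phi_4(1,2,2,2)=(x_{13}x_{14}+x_{13}x_{34}+x_{14}x_{34})(x_{23}x_{24}+x_{23}x_{34}+x_{24}x_{34})$, whose leading monomial is again $x_{13}x_{14}x_{23}x_{24}$. So in your triangularity step the largest leading monomial can be shared by two terms of $R$ and cancel, and nothing is proved.

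Changing the term order does not rescue this. For $m=4$ and any monomial order, let $x_{ab}$ be the smallest variable and $\{c,d\}=[4]\setminus\{a,b\}$. The spanning tree polynomial of the triangle on $\{a,b,c\}$ then has leading monomial $x_{ac}x_{bc}$. Hence the nested pair $\{\{a,b,c\},\{d\}\}$, $\{\{a,b,d\},\{c\}\}$ and the crossing pair $\{\{a,c\},\{b,d\}\}$, $\{\{a,d\},\{b,c\}\}$ both have leading monomial $x_{ac}x_{ad}x_{bc}x_{bd}$.

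What is missing is a layered argument. There, the distinguished monomial of a pair is allowed to occur in other products, provided their coefficients have already been shown to vanish. The paper does exactly this:
\begin{enumerate}
\item it kills terms with $A=B$ using a squared spanning forest;
\item it kills nested pairs using three squared trees plus two connecting edges;
\item it kills crossing pairs using four squared trees joined by a $4$-cycle $e_1e_2e_3e_4$, whose only other occurrences are in two nested pairs. This is precisely the collision above.
\end{enumerate}

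Your fallback specialization does not avoid the problem either. After specializing, the vertex-$1$ factor of a product still sees only the multiset union of $S_A\setminus 1$ and $S_B\setminus 1$. You give no argument that the remaining $T$-dependent factors separate the pairs, and that is exactly where the difficulty lies.
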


\begin{proof}
  First consider $X_{m, m-1, m-1}^{(m)}$.  A set partition of $[m]$ into $m-1$ parts is a choice of two distinct elements of $[m]$, which are paired in one part, while all other elements of $[m]$ are singleton parts.  The spanning forest polynomial on $K_m$ for such a partition is simply the edge variable for the edge joining the two vertices of the size-two part.  Therefore, the spanning forest polynomials involved in $X_{m, m-1, m-1}^{(m)}$ are algebraically independent; in particular, there are no linear relations between products of pairs of them.  This gives that $\dim X_{m, m-1, m-1}^{(m)} = 0$; Proposition~\ref{prop containment of Xs} then gives that $\dim X_{m, m-1, m-1}^{(n)} = 0$ for all $n\geq m$, and hence the genericity of $K_m$ in this case.

  Next, consider $X_{m, 2, 2}^{(m)}$.  The argument is a bit more involved in this case.  Suppose we had a linear relation
  \begin{equation}\label{eq lin dep}
  \alpha_1 \phi_m(A_1)\phi_m(B_1) + \alpha_2 \phi_m(A_2)\phi_m(B_2) + \cdots + \alpha_k\phi_m(A_k)\phi_m(B_k) = 0
  \end{equation}
  with $A_i, B_i \in \Pi_{m, 2}$.
Note that given an edge monomial appearing in some $\phi_m(A_i)\phi_m(B_i)$, the exponent of each variable is either $0$, $1$, or $2$, and if a variable appears to the power $2$, then the corresponding edge appears in both spanning forests that made up that monomial.  Note also that a spanning forest determines a set partition of the vertices by the partition of the forest into its component trees.

Focus on the first term.
If $A_1=B_1$, then picking one spanning forest contributing to $\phi_m(A_1)$ and squaring it, we obtain a term in $\phi_m(A_1)\phi_m(B_1)$ that, by the observations above, cannot appear in any other term, so taking the coefficient of this monomial in \eqref{eq lin dep} yields $\alpha_1 = 0$.

\begin{figure}
  \begin{center}
    \includegraphics{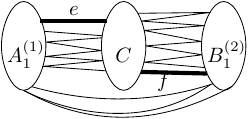} \qquad \includegraphics{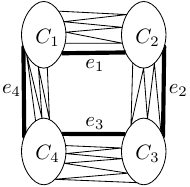}
  \end{center}
  \caption{The set up for the proof in the nested and nonnested cases.}\label{fig nested and nonnested}
\end{figure}

Suppose now that $A_1\neq B_1$ and that the two set partitions are nested in the sense that one part of one of them is strictly contained in one part of the other.  Then write $A_1 = \{A_1^{(1)}, A_1^{(2)}\}$ and $B_1= \{B_1^{(1)}, B_1^{(2)}\}$ with $A_1^{(1)} \subsetneq B_1^{(1)}$ and hence $B_1^{(2)}\subsetneq A_1^{(2)}$.  This gives a partition of $[m]$ into three parts $\{A_1^{(1)}, C = B_1^{(1)}\cap A_1^{(2)}, B_1^{(2)}\}$.  Take three spanning trees $T_1$, $T_2$, and $T_3$ on $A_1^{(1)}$, $C$, and $B_1^{(2)}$ respectively.  Let $e$ be an edge between $A_1^{(1)}$ and $C$, and let $f$ be an edge between $C$ and $B_1^{(2)}$.  This set-up is illustrated on the left in Figure~\ref{fig nested and nonnested}.  Let $\mu$ be the monomial formed of the square of the edges of $T_1$, $T_2$, and $T_3$ along with one copy each of $e$ and $f$.  The monomial $\mu$ appears in $\phi_m(A_1)\phi_m(B_1)$ since the two trees $T_1$ and $T_2\cup\{f\} \cup T_3$ form a spanning forest associated to $A_1$ and the two trees $T_1\cup\{e\}\cup T_2$ and $T_3$ form a spanning forest associated to $B_1$.

Next, we claim that $\mu$ appears in no other term in \eqref{eq lin dep}.  Suppose on the contrary that it did, say in  $\phi_m(A_i)\phi_m(B_i)$ for some $i\neq 1$.  Since the trees $T_1$, $T_2$, and $T_3$ must be subgraphs of some spanning forest of $A_i$ and of some spanning forest of $B_i$, and since the components of the forest determine the partition, we see that the only possibilities for $A_i$ and $B_i$ are $\{A_1^{(1)}\cup C, B_1^{(2)}\}$, $\{A_1^{(1)}\cup B_1^{(2)}, C\}$, and $\{A_1^{(1)}, C\cup B_1^{(2)}\}$.  We already showed that if $A_i=B_i$ then the coefficient must be $0$, so we must take two of these set partitions.  If we take the first two, then $\{A_i, B_i\} = \{A_1, B_1\}$, which is a contradiction.  However, if we take the third one, then neither $e$ nor $f$ can appear in a spanning forest polynomial for this partition, and so we cannot build a spanning forest polynomial for this partition from $\mu$, which is also a contradiction.  Therefore $\mu$ appears only in the first term, and so taking the coefficient of $\mu$ in \eqref{eq lin dep}, we obtain $\alpha_1=0$.

Finally, suppose $A_1\neq B_1$ and the set partitions are not nested.  Again write $A_1 = \{A_1^{(1)}, A_1^{(2)}\}$ and $B_1= \{B_1^{(1)}, B_1^{(2)}\}$ and let $C_1 = A_1^{(1)}\cap B_1^{(1)}$, $C_2 = A_1^{(1)}\cap B_1^{(2)}$, $C_3 = A_1^{(2)}\cap B_1^{(2)}$ and $C_4 = A_1^{(2)}\cap B_1^{(1)}$, all of which are nonempty by hypothesis.  Let $T_1$, $T_2$, $T_3$, and $T_4$ be spanning trees of $C_1$, $C_2$, $C_3$, and $C_4$ respectively.  Let $e_1$ be an edge between $C_1$ and $C_2$, and likewise for $e_2$, $e_3$, and $e_4$ (in particular, $e_4$ is between $C_4$ and $C_1$).  This set-up is illustrated on the right in Figure~\ref{fig nested and nonnested}. Let $\mu$ be the monomial consisting of $e_1e_2e_3e_4$ and the squares of the edge variables of $T_1$, $T_2$, $T_3$, and $T_4$.

We claim that $\mu$ cannot appear in any other term.  Suppose on the contrary that it appeared in $\phi_m(A_i)\phi_m(B_i)$ for some $i\neq 1$.  As above, that would mean that the parts of the partitions $A_i$ and $B_i$ would need to be formed from unions of $C_1$, $C_2$, $C_3$, and $C_4$. Furthermore, we would need to be able to use edges from $e_1$, $e_2$, $e_3$, and $e_4$ to link up the sets which are unioned into any part, and exactly two of these edges need to be used to link the parts of $A_i$ and the other two to link the parts of $B_i$.  Other than the pair $A_1 B_1$, the only other possibilities for such partitions are $\{C_1, C_2\cup C_3\cup C_4\}$ with $\{C_1\cup C_2\cup C_4, C_3\}$ and $\{C_2, C_1\cup C_3\cup C_4\}$ with $\{C_1\cup C_2\cup C_3, C_4\}$.  However, both of these are nested pairs of set partitions and so by the arguments above have coefficient $0$.  Then once more taking the coefficient of $\mu$ in \eqref{eq lin dep}, we obtain $\alpha_1=0$.

The same argument holds for each term, and hence the linear identity is trivial, and so $\dim X_{m, 2, 2}^{(m)} = 0$. Proposition~\ref{prop containment of Xs} then gives $\dim X_{m, 2, 2}^{(n)} = 0$ for all $n\geq m$ and hence the genericity of $K_m$ in this case.

\end{proof}

%% file: id-dim-table.tex
\begin{table}
    \setlength\tabcolsep{4pt}
    \begin{subtable}[b]{0.485\linewidth}
        \centering
        \begin{tabular}{c|cc}
            $m=n=4$ & $k=2$ & $k=3$ \\
            \hline
            $\ell=2$ & 0 & \\
            $\ell=3$ & 8 & 0
        \end{tabular}
        \caption{4 marked vertices on $K_4$}
        \label{table:id-dim-table:m4n4}
    \end{subtable}
    \begin{subtable}[b]{0.485\linewidth}
      \phantom{DUMMY}
     \end{subtable}
    \bigskip
    \begin{subtable}[b]{0.485\linewidth}
        \flushright
        \centering
        \begin{tabular}{c|ccc}
            $m=n=5$ & $k=2$ & $k=3$ & $k=4$\\
          \hline
          $\ell=2$ & 0 & &\\
          $\ell=3$ & 68 & 20 &\\
          $\ell=4$ & 15 & 80 & 0
        \end{tabular}
        \caption{5 marked vertices on $K_5$}
        \label{table:id-dim-table:m5n5}
    \end{subtable}
    \hfill
    \begin{subtable}[b]{0.485\linewidth}
        \flushright
        \centering
        \begin{tabular}{c|ccc}
            $m=5$, $n=6$ & $k=2$ & $k=3$ & $k=4$\\
            \hline
            $\ell=2$ & 0 & & \\
            $\ell=3$ & {\color{blue}\bf 64} & 20 &\\
            $\ell=4$ & 15 & {\color{blue}\bf 64} & 0
        \end{tabular}
        \caption{5 marked vertices on $K_6$}
        \label{table:id-dim-table:m5n6}
    \end{subtable}
    \bigskip
    \begin{subtable}[b]{0.485\linewidth}
        \flushright
        \centering
        \begin{tabular}{c|cccc}
            $m=n=6$ & $k=2$ & $k=3$ & $k=4$ & $k=5$\\
            \hline
            $\ell=2$ & 0 & & &\\
            $\ell=3$ & 280 & 210 & & \\
            $\ell=4$ & 180 & 2052 & 340 & \\
            $\ell=5$ & 24 & 295 & 390 & 0
        \end{tabular}
        \caption{6 marked vertices on $K_6$}
        \label{table:id-dim-table:m6n6}
    \end{subtable}
    \hfill
    \begin{subtable}[b]{0.485\linewidth}
        \flushright
        \centering
        \begin{tabular}{c|cccc}
            $m=6$, $n=7$ & $k=2$ & $k=3$ & $k=4$ & $k=5$\\
            \hline
             $\ell=2$ & 0 & & &\\
             $\ell=3$ &    & 210 & & \\
             $\ell=4$ & {\color{blue}\bf 175} & {\color{blue}\bf 1123} & {\color{blue}\bf 210} & \\
            $\ell=5$ & 24 & {\color{blue}\bf 175} & {\color{blue}\bf 280} & 0
        \end{tabular}
        \caption{6 marked vertices on $K_7$}
        \label{table:id-dim-table:m6n7}
    \end{subtable}
    \caption{Values of $\dim X_{m,k,\ell}^{(n)}$ given parameters $(m,k,\ell)$ and $n \ge m$}
    \label{table:id-dim-table}
\end{table}

%% file: mtt-det-identities.tex
\section{Connections between determinantal identities and spanning forest identities}

As mentioned in the introduction, spanning forest identities can be derived from determinantal identities. In particular, by the all-minors matrix-tree theorem, the determinantal minors of the graph Laplacian $L_G$ enumerate certain families of spanning forests polynomials of $G$. 

In Example \ref{ex:sf-identities:5-3-4-not-generic}, we saw that for some parameters $(m,k,\ell)$, $K_m$ can fail to be forest identity-generic. However, this failure does not occur in the context of determinantal identities: Muir's law of extensible minors states that, as we will make precise shortly, the general homogeneous identities involving marked rows and columns from $[m]$ are exactly those that are true on the generic $m$-by-$m$ matrix.

Muir's law of extensible minors is usually stated for generic (i.e. whose entries are independent indeterminates) square matrices; however, it also applies to generic symmetric square matrices and to Laplacians of complete graphs, which is the context where it will be useful for us.  

Thus the all-minors matrix-tree theorem together with Muir's law says that to create general spanning forest identities involving marked vertices $[m]$, one can simply convert determinantal identities of the Laplacian of $K_m$. This is the subject of Section \ref{subsec mtt}.
One can then ask to go the other way: for which parameters can {\em all} general spanning forest identities be derived from some determinantal identity? In Section \ref{subsec:mtt-det-identities:genericity}, we discuss some sufficient conditions for this to occur that pertain to the \emph{MTT maps}; these maps will be studied in more detail in Section \ref{sec:narayana}.

\subsection{Determinantal identities and Muir's law}\label{subsec det and Muir}

To prepare for the statement of Muir's law, we introduce determinantal identities of complete graph Laplacians just as we introduced spanning forest identities of complete graphs in Section \ref{subsec spanning forest identities}. Our treatment of determinantal identities is adapted from \cite{bs}.

We will describe minors of $L_G$ with minor symbols. A {\em minor symbol} on $[m]$ of size $k$ is a formal symbol $\delta(\alpha \mid \beta)$, where $\alpha, \beta \subseteq [m]$ are of size $k$.  To keep notation light, whenever unambiguous we will omit curly braces and commas in writing the $\alpha$ and $\beta$. 
We let $\Delta_{m,k}$ denote the collection of all minor symbols on $[m]$ of size $k$ and $\Delta_m := \bigcup_{k=0}^m \Delta_{m,k}$, and we will work with the polynomial ring $F[\Delta_m]$.

The {\em interpretation map on minor symbols on $[m]$} 
is the $F$-algebra homomorphism $\psi_m : F[\Delta_m] \to F[E(K_m)]$ determined by sending $\delta(\alpha \mid \beta)$ to $\det L_{K_m}(\alpha \mid \beta)$, which is the determinant of the submatrix of $L_{K_m}$ obtained by removing rows $\alpha$ and columns $\beta$. As convention, we declare that $\det L_{K_m} ([m] \mid [m]) = 1$. We call $Y_m := \ker \psi_m$ the {\em space of determinantal identities of $L_{K_m}$ with marked rows/columns $[m]$}.

To interpret minor symbols on $[m]$ in $K_n$ for $n \ge m$, we have the injective $F$-algebra map $\mu_m^{(n)} : F[\Delta_m] \to F[\Delta_n]$ sending each minor symbol on $[m]$ to itself but viewed as a minor symbol on $[n]$, which we call the {\em extension map of minor symbols} or the {\em Muir map}. Define $\Psi_m^{(n)} := \psi_n \circ \mu_m^{(n)}$, and call $Y_m^{(n)} := \ker \Psi_m^{(n)}$ the {\em space of determinantal identities of $L_{K_n}$ with marked rows/columns $[m]$}. As a shorthand, we let $Y_m := Y_m^{(m)}$.

$F[\Delta_m]$ is graded by size, where the size of a monomial of minor symbols is the sum of the sizes of the symbols and can also be multigraded by the multiset of symbol sizes.
Analogously to Proposition~\ref{prop containment of Xs}, $(Y_{m,M}^{(n)})_{n \ge m}$ is a decreasing chain of subspaces, where the $M$ subscript indicates taking only those elements with multiset of symbol sizes of $M$; however, we already have more than this from classical results on determinantal identities: Muir's law of extensible minors implies that the chain of homogeneous subspaces stabilizes immediately.
\begin{theorem}[Muir's law of extensible minors for Laplacians of complete graphs]\label{thm:mtt-det-identities:muir}
    For all $n \ge m$, the homogeneous elements of $Y_{m}$ (with respect to the $\mb{Z}_{\ge 0}$-grading on $F[\Delta_m]$) are contained in $Y_{m}^{(n)}$.
\end{theorem}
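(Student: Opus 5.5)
The plan is to reduce minors of $L_{K_n}$ with removed rows and columns in $[m]$ to minors of a single $m$-by-$m$ matrix via a Schur complement (Kron reduction). The key fact is that this $m$-by-$m$ matrix is again the Laplacian of a complete graph on $[m]$, with new edge weights.

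Write $L_{K_n}$ in block form with respect to $[m]$ and $[n]\setminus[m]$:
\[
L_{K_n} = \begin{pmatrix} P & Q \\ Q^T & S \end{pmatrix},
\]
where $S$ is the principal submatrix on the unmarked vertices. By the all-minors matrix-tree theorem, $\det S$ is, up to sign, a nonzero spanning forest polynomial, so $S$ is invertible over the fraction field $F(E(K_n))$. Set $M' := P - QS^{-1}Q^T$. For $\alpha,\beta\subseteq[m]$ of equal size, the submatrix $L_{K_n}(\alpha\mid\beta)$ has block form
\[
\begin{pmatrix} P(\alpha\mid\beta) & Q(\alpha\mid\cdot) \\ Q^T(\cdot\mid\beta) & S \end{pmatrix},
\]
with the row and column orderings preserved. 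The Schur complement formula then gives
\[
\det L_{K_n}(\alpha\mid\beta) = \det S\cdot \det M'(\alpha\mid\beta).
\]
This includes the case $\alpha=\beta=[m]$, where both sides equal $\det S$ with the convention $\det M'([m]\mid[m])=1$. Next I would check that $M'$ is symmetric with zero row sums. Symmetry is clear. For the row sums, $L_{K_n}\mathbf{1}=0$ gives $P\mathbf{1}+Q\mathbf{1}=0$ and $Q^T\mathbf{1}+S\mathbf{1}=0$, so $S^{-1}Q^T\mathbf{1}=-\mathbf{1}$ and hence $M'\mathbf{1}=P\mathbf{1}+Q\mathbf{1}=0$. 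Consequently $M' = L_{K_m}|_{x_{ij}\mapsto y_{ij}}$ with $y_{ij}:=M'_{ij}\in F(E(K_n))$. Therefore, writing $\sigma$ for the substitution homomorphism $F[E(K_m)]\to F(E(K_n))$ with $x_{ij}\mapsto y_{ij}$, every monomial $\delta(\alpha_1\mid\beta_1)\cdots\delta(\alpha_d\mid\beta_d)$ satisfies
\[
\Psi_m^{(n)}\bigl(\delta(\alpha_1\mid\beta_1)\cdots\delta(\alpha_d\mid\beta_d)\bigr) = (\det S)^d\,\sigma\Bigl(\psi_m\bigl(\delta(\alpha_1\mid\beta_1)\cdots\delta(\alpha_d\mid\beta_d)\bigr)\Bigr).
\]

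The remaining issue, and the one I expect to be the main obstacle, is that the factor $(\det S)^d$ depends on the number $d$ of symbols in the monomial. The size grading does not fix $d$, because $\delta([m]\mid[m])$ has size $m$ and maps to $1$ under $\psi_m$ but to $\det S$ under $\Psi_m^{(n)}$. This is exactly why homogeneity is needed. The fix is a second grading. A symbol of size $k$ maps under $\psi_m$ to a polynomial that is homogeneous of degree $m-k$ in the $x_{ij}$. Hence a monomial of total size $s$ with $d$ symbols maps to a polynomial homogeneous of degree $dm-s$.

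So let $R\in Y_m$ be homogeneous of size $s$, and split $R=\sum_d R_d$ according to the number of symbols. The images $\psi_m(R_d)$ lie in distinct $x$-degrees $dm-s$, so $\psi_m(R)=0$ forces $\psi_m(R_d)=0$ for every $d$. Then
\[
\Psi_m^{(n)}(R)=\sum_d (\det S)^d\,\sigma\bigl(\psi_m(R_d)\bigr)=0.
\]
Since $\Psi_m^{(n)}(R)$ is a polynomial in $F[E(K_n)]$, this vanishing holds there, and so $R\in Y_m^{(n)}$.

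I would finish by double-checking two points: that the sign conventions for removing rows and columns match on both sides of the Schur complement identity, and that the substitution $\sigma$ is a well-defined ring homomorphism into the fraction field.
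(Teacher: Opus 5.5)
Your proposal is correct and follows the same route as the paper's appendix proof: take the Schur complement of the unmarked block, show it is a reweighted $L_{K_m}$, pull the identity back along the substitution homomorphism, and clear powers of $\det S$. Your version is cleaner in two places. First, you get the zero row sums directly from $L_{K_n}\mathbf{1}=0$, rather than through the paper's all-minors matrix-tree sign computation. Second, you observe that a $d$-factor monomial of size $s$ maps to $x$-degree $dm-s$; this justifies splitting a size-homogeneous identity by number of factors, a step the paper's proof tacitly assumes when it writes every monomial with the same number $t$ of factors.
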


In other words, if a homogeneous identity holds on the Laplacian of $K_m$, it is an identity regardless of the Laplacian in which one interprets it. Hence, for all $n \ge m$ and multisets $M$, $Y_{m,M}^{(n)} = Y_{m,M}$.
The usual version of Muir's law is for generic matrices: that is, matrices with independent indeterminate entries.  The complete Laplacian version does not immediately follow as we need all homogeneous minor identities of complete Laplacians to extend, not just those which are also generic matrix identities.  However, the same proof technique works for this case.  For completeness, we give the argument in Appendix~\ref{header:muir-pf} based on the proof of \cite{bs}.

\begin{example}
    Notice that for any $2\times 2$ matrix, not just a Laplacian matrix, 
    \begin{align*}
        \delta(\emptyset \mid \emptyset) - \delta(1 \mid 1) \delta(2 \mid 2) - \delta(1 \mid 2) \delta(2 \mid 1)
    \end{align*}
    is in the kernel of the interpretation map since it is the definition of the determinant of a 2-by-2 matrix. To homogenize this identity, we can pad out the first term with an extra factor of $\delta(12 \mid 12)$ to get
    \begin{align*}
        \delta(\emptyset \mid \emptyset) \delta(12 \mid 12) - \delta(1 \mid 1) \delta(2 \mid 2) - \delta(1 \mid 2) \delta(2 \mid 1),
    \end{align*}
    which, by Muir's law, is in the kernel of the interpretation map for any square matrix. This is the identity we opened the paper with, often known as the Dodgson identity.
\end{example}

\begin{example}\label{eg:mtt-det-identities:delta}
  Returning to the Laplacian case, since the row and column sums of a graph Laplacian are always zero, the graph Laplacian is never of full rank.  Consequently, the interesting version of the Dodgson identity in this case is not on the Laplacian itself, but on the Laplacian with one row and column removed, such as
    \begin{align*}
        \delta(1 \mid 1) \delta(123 \mid 123) - \delta(12 \mid 12) \delta(13 \mid 13) + \delta(12 \mid 13) \delta(13 \mid 12) \in F[\Delta_3],
    \end{align*}
    which by the previous example and Muir's law is a true identity when interpreted on any graph Laplacian.
\end{example}

\subsection{The all-minors matrix-tree theorem}\label{subsec mtt}

We now discuss the all-minors matrix-tree theorem (see \cite{ch}), which says that the minors of the Laplacian of a graph $G$ enumerate a certain family of spanning forest polynomials of $G$. From our perspective the all-minors matrix-tree theorem associates to each minor symbol in $\Delta_m$ a signed sum of set partitions in a way that is compatible with our determinant map $\psi_m$ and our set partition interpretation map $\phi_m$. That is, we have a map $\tau_m : F[\Delta_m] \to F[\Pi_m]$ called the  {\em MTT map} which satisfies $\psi_m = \phi_m \circ \tau_m$. The goal of this section is to define this map.

Given equicardinal sets $\alpha, \beta \subseteq [m]$, a {\em matching on $\delta(\alpha \mid \beta)$} is a bijection $\sigma : \alpha \to \beta$; it is an {\em MTT matching} if it fixes $\alpha \cap \beta$ pointwise. We write $\sigma \sim \delta(\alpha \mid \beta)$ to indicate that $\sigma$ is an MTT matching on $\delta(\alpha \mid \beta)$. Notice that a matching $\sigma$ on $\delta(\alpha \mid \beta)$ can be viewed as the partial set partition $A_\sigma$ of $[m]$ with a block $\{c\}$ for each $c \in \alpha \cap \beta$ and a block $\{a, \sigma(a)\}$ for each $a \in \alpha \setminus \beta$. In fact, if we are given $\delta(\alpha \mid \beta)$ and $A_\sigma$, we can recover what the matching $\sigma$ is. Hence, we will denote this partial set partition $A_\sigma$ simply as $\sigma$. We say that a set partition $A \in \Pi_{n,k}$ {\em grows} from a minor symbol $\delta(\alpha \mid \beta) \in \Delta_{n,k}$ if it extends a matching $\sigma$ (viewed as a partial set partition) on $\delta(\alpha \mid \beta)$.

\begin{example}
    There are six set partitions that grow from $\delta(124 \mid 135) \in \Delta_{6,3}$:
    \begin{align*}
        (1,2,2,3,3,1), 
        (1,2,2,3,3,2), 
        (1,2,2,3,3,3), \\
        (1,2,3,3,2,1), 
        (1,2,3,3,2,2), 
        (1,2,3,3,2,3).
    \end{align*}
Note that there are two matchings, hence two partial set partitions, each of which completes to three set partitions.
\end{example}

The all-minors matrix-tree theorem involves some slightly annoying signs.
Given a subset $A$ of a linearly-ordered set $S$, an {\em inversion of $A$ in $S$} is a pair $(i,j) \in A \times (S \setminus A)$ such that $j < i$. The number of inversions of $A$ in $S$ is denoted $\eta(A,S)$, and the {\em sign of $A$ in $S$} is $\eps(A,S) := (-1)^{\eta(A,S)}$.
Given a matching $\sigma : \alpha \to \beta$ (where $\alpha, \beta \subseteq [m]$), an {\em inversion of $\sigma$} is a pair $(i,j) \in \alpha \times \beta$ such that $i < j$ and $\sigma(i) > \sigma(j)$. The number of inversions of $\sigma$ is denoted $\eta(\sigma)$, and the {\em sign of $\sigma$} is $\eps(\sigma) := (-1)^{\eta(\sigma)}$.

\begin{example}
    The subset $\{2,4,5\}$ of $[5] = \{1,2,3,4,5\}$ has five inversions, so its sign is negative. The matching $\sigma : \br{1,3,4} \to \br{2,3,4}$ sending $1 \mapsto 5$, $3 \mapsto 3$, and $4 \mapsto 2$ has three inversions, so its sign is also negative.
\end{example}

With this, we define the 
{\em $m$-th MTT map} to be the $F$-algebra homomorphism $\tau_m : F[\Delta_m] \to F[\Pi_m]$ determined by
    \begin{align*}
        \delta(\alpha \mid \beta) &\mapsto \eps(\alpha, [m]) \eps(\beta, [m]) \sum_{\sigma \sim \delta(\alpha \mid \beta)} \eps(\sigma) \sum_{A \preccurlyeq \sigma} A,
    \end{align*}
    where $\ab{\alpha} = \ab{\beta} = k$, the first sum is over all MTT matchings $\sigma : \alpha \to \beta$, and the second sum is over all set partitions $A$ extending $\sigma$ (viewed as a partial set partition).

\begin{theorem}[All-minors matrix-tree theorem]\label{thm:mtt-det-identities:ammtt}
    For all $m \ge 1$, $\psi_m = \phi_m \circ \tau_m$.
\end{theorem}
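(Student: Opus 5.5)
Since $\psi_m$, $\phi_m$ and $\tau_m$ are all $F$-algebra homomorphisms out of polynomial rings, it suffices to check $\psi_m(\delta(\alpha\mid\beta)) = \phi_m(\tau_m(\delta(\alpha\mid\beta)))$ on each generator. Concretely, for $|\alpha|=|\beta|=k$ we must show
\[
\det L_{K_m}(\alpha\mid\beta) = \eps(\alpha,[m])\eps(\beta,[m])\sum_{\sigma\sim\delta(\alpha\mid\beta)}\eps(\sigma)\,\Phi_m^{(m)}(\sigma),
\]
using that $\sum_{A\succcurlyeq\sigma}\phi_m(A)$ is exactly the spanning forest polynomial of the partial partition $\sigma$. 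The edge cases $k=0$ (both sides vanish: $L$ is singular, and no forest has zero trees) and $k=m$ (the convention $1$, and the empty forest on $m$ singletons) are checked directly. For $0<k<m$ this is the classical all-minors matrix-tree theorem of Chaiken \cite{ch}, and the cleanest route is to quote it and reconcile sign conventions. For a self-contained argument, I would prove it as follows.

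Write $L_{K_m} = -\,D D^{T}$ up to the sign convention, where $D$ is the $m\times\binom{m}{2}$ signed incidence matrix with weights $\sqrt{x_e}$. Since the paper's diagonal is $-\sum x_{ik}$ and the off-diagonal is $+x_{ij}$, we have $L_{K_m} = -N$ with $N$ the usual Laplacian, giving a global factor $(-1)^{m-k}$. Apply Cauchy--Binet to $N(\alpha\mid\beta) = D_{\bar\alpha}D_{\bar\beta}^T$. This gives a sum over edge sets $S$ of size $m-k$ of $\det D_{\bar\alpha,S}\det D_{\bar\beta,S}$. The standard lemma is that $\det D_{\bar\alpha,S}$ is nonzero (equal to $\pm\prod_{e\in S}\sqrt{x_e}$) iff $S$ is a forest in which each tree contains exactly one vertex of $\alpha$. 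Hence both determinants are nonzero iff $S$ is a forest with $k$ trees, each containing exactly one element of $\alpha$ and exactly one of $\beta$. Such a forest determines a bijection $\sigma:\alpha\to\beta$, sending $a$ to the $\beta$-vertex in its tree, which automatically fixes $\alpha\cap\beta$; that is, it is an MTT matching. Conversely, the forests agreeing with the partial partition $\sigma$ are exactly these. So the monomial content matches, and the whole proof reduces to identifying signs.

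The main obstacle is the sign: showing that $\det D_{\bar\alpha,S}\det D_{\bar\beta,S}$, together with $(-1)^{m-k}$, equals $\eps(\alpha,[m])\eps(\beta,[m])\eps(\sigma)\prod_{e\in S}x_e$. I would do this by contracting each tree to its root and comparing orientations, following Chaiken's argument. First, orient each tree toward its $\alpha$-vertex; the product of the two determinants then depends only on how the $\beta$-rooted orientation differs from it. Next, reduce to the case where every tree is a path from $a$ to $\sigma(a)$, since edges off that path contribute squares. Finally, compute the sign by a Laplace expansion, which produces the row/column placement signs $\eps(\alpha,[m])\eps(\beta,[m])$ and a permutation sign $\eps(\sigma)$. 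I expect the bookkeeping here to be delicate, so I would also verify the normalization on small cases (for example $\delta(1\mid 2)$ on $K_2$ and $K_3$, and Example~\ref{eg dodgson as set partitions}) to confirm the paper's convention on $\eps$ and the sign of $L$.
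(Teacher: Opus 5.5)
Your reduction to generators, the edge cases $k=0$ and $k=m$, and the Cauchy--Binet analysis (exactly the forests agreeing with some MTT matching $\sigma$ contribute) are all correct. This is the standard argument; the paper gives no proof and only cites Chaiken. The gap is the sign target in your last paragraph. The identity you propose to prove, with the factor $(-1)^{m-k}$ included, is false whenever $m-k$ is odd. So is the theorem as literally stated, given the paper's convention $L_{K_m}=-N$.

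Your own computation shows why. For $N=DD^{T}$, the product $\det D_{\bar\alpha,S}\det D_{\bar\beta,S}$ by itself already equals $\eps(\alpha,[m])\eps(\beta,[m])\eps(\sigma)\prod_{e\in S}x_e$. This is Chaiken's sign, since $\eps(\alpha,[m])\eps(\beta,[m])=(-1)^{\sum\alpha+\sum\beta}$. Hence $\det N(\alpha\mid\beta)=\phi_m(\tau_m(\delta(\alpha\mid\beta)))$, and the factor $(-1)^{m-k}$ is left over.

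Concrete counterexamples:
\begin{itemize}
\item On $K_2$, $\psi_2(\delta(1\mid 1))=(L_{K_2})_{22}=-x_{12}$, while $\phi_2(\tau_2(\delta(1\mid 1)))=\phi_2((1,1))=x_{12}$.
\item On $K_2$, $\psi_2(\delta(1\mid 2))=x_{12}$, while $\phi_2(\tau_2(\delta(1\mid 2)))=-x_{12}$.
\item On $K_3$, $\psi_3(\delta(12\mid 12))=-(x_{13}+x_{23})$, while $\phi_3(\tau_3(\delta(12\mid 12)))=x_{13}+x_{23}$.
\end{itemize}
In general $\tau_m(\delta(\alpha\mid\alpha))$ is a positive sum of partitions, whereas $\det L_{K_m}(\alpha\mid\alpha)$ has sign $(-1)^{m-|\alpha|}$. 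The paper's $K_4$ example has $m-k=2$, so it cannot detect this. The introduction's claim $\det L_G(1\mid 1)=\sum_T\prod_{e\in T}x_e$ has the same slip for even $n$. The small-case check you planned would have exposed the problem, and no amount of orientation bookkeeping can produce the identity you wrote down.

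What your argument does prove, once the sign step is carried out, is
\[
\psi_m(\delta(\alpha\mid\beta)) = (-1)^{m-|\alpha|}\,\phi_m\bigl(\tau_m(\delta(\alpha\mid\beta))\bigr).
\]
Equivalently, $\psi_m=\phi_m\circ\tau_m$ holds verbatim if $\psi_m$ is defined using the usual positive-diagonal Laplacian $-L_{K_m}$, or if $(-1)^{m-|\alpha|}$ is absorbed into $\tau_m$. You should state and prove that corrected version.

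The discrepancy is harmless downstream. On a monomial with $t$ factors of total size $s$, it is the sign $(-1)^{tm-s}$. This is a single global sign on each multigraded piece indexed by $M$, and on any span of monomials with fixed $t$ and $s$, such as the padded Dodgson identity. So the kernel and image statements the paper draws from the theorem, including the combination with Muir's law, are unaffected.

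For the corrected sign step, you can simply invoke Chaiken, since the target is now exactly his sign. Your outline (orient toward roots, reduce to paths, Laplace-expand) is only a sketch, and would need to be written out in full if you want the argument self-contained.
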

For a combinatorial proof see \cite{ch}.

\begin{example}
    Notice that
    \begin{align*}
        \det L_{K_4}(13 \mid 23)
        = \det \begin{bmatrix}
            x_{12} & x_{24} \\
            x_{14} & -x_{14} - x_{24} - x_{34}
        \end{bmatrix}
        = -x_{12}x_{14} - x_{12}x_{24} - x_{12}x_{34} - x_{14}x_{24}.
    \end{align*}
    To show that this agrees with what the all-minors matrix-tree theorem tells us, notice that $\eps(13, [4]) = -1$, that $\eps(23, [4]) = 1$, and that the only matching $\sigma$ on $\delta(13 \mid 24)$ (which maps $1 \mapsto 2$ and $3 \mapsto 3$) has positive sign. Furthermore, the two set partitions of $[4]$ that extend $\sigma$ are $(1,1,2,1)$ and $(1,1,2,2)$, whose spanning forest polynomials respectively are $x_{12} x_{34}$ and $x_{12} x_{14} + x_{12} x_{24} + x_{14} x_{24}$.
\end{example}

\begin{remark}\label{rmk giant diagram}
    As a consequence of this theorem, we have that the following diagram commutes for all $m$. Indeed, the theorem says that the triangles commute, and the rectangles commute construction.
    \begin{center}
        \begin{tikzcd}
                & F[\Pi_m] \ar[dr, "\phi_{m}"] \ar[dd, hook, "\iota_{m}^{(m+1)}", near end] \\
            F[\Delta_m] \ar[ur, "\tau_{m}", near end] \ar[rr, "\psi_{m}", crossing over, near end] \ar[dd, "\mu_{m}^{(m+1)}", hook, near start]
                &
                    & F[E(K_m)] \\
                & F[\Pi_{m+1}] \ar[dr, "\phi_{m+1}"] \ar[dd, hook, "\iota_{m+1}^{(m+2)}", near end] \\
            F[\Delta_{m+1}] \ar[ur, "\tau_{m+1}", near end] \ar[rr, "\psi_{m+1}", crossing over, near end] \ar[dd, "\mu_{m+1}^{(m+2)}", hook, near start]
                &
                    & F[E(K_{m+1})] \\
                & F[\Pi_{m+2}] \ar[dr, "\phi_{m+2}"] \ar[dd, hook, "\iota_{m+2}^{(m+3)}", near end] \\
            F[\Delta_{m+2}] \ar[ur, "\tau_{m+2}", near end] \ar[rr, "\psi_{m+2}", crossing over, near end] \ar[dd, "\mu_{m+2}^{(m+3)}", hook, near start]
                &
                    & F[E(K_{m+2})] \\
                & \vdots \\
            \vdots
        \end{tikzcd}
    \end{center}
\end{remark}

\begin{example}\label{eg using the framework}
    Applying $\tau_3$ to the determinantal identity
    \begin{align*}
        R := \delta(1 \mid 1) \delta(123 \mid 123) - \delta(12 \mid 12) \delta(13 \mid 13) + \delta(12 \mid 13) \delta(13 \mid 12) \in Y_3
    \end{align*}
    from Example \ref{eg:mtt-det-identities:delta} yields
    \begin{align*}
        S := \tau_3(R) = (1,2,1)^2 - ((1,2,1) + (1,1,2))((1,2,1) + (1,2,2)) + (1,1,1)(1,2,3),
    \end{align*}
    which is the spanning forest identity on $K_3$ given in Example~\ref{eg dodgson as set partitions}. Further, this is an identity on $K_n$ for all $n \ge 3$ since for all $n \ge 3$, $\mu_3^{(n)}(R)$ is an identity on the Laplacian of $K_n$ by Muir's law, so $\tau_n \circ \mu_3^{(n)}(R) = \iota_3^{(n)}(S)$ is a spanning forest identity on $K_n$ by Remark~\ref{rmk giant diagram}.
\end{example}

\subsection{Forest identity-genericity and the MTT map}\label{subsec:mtt-det-identities:genericity}

Example~\ref{eg using the framework} shows how Muir's law can be used to convert a determinantal identity on marked rows/columns $[m]$ into a {\em general} spanning forest identity on marked vertices $[m]$.
Moreover, the example demonstrates how the perspective encapsulated in \ref{rmk giant diagram} gives a clarifying framework for thinking about spanning forest identities.  In \cite{fy} we moved between thinking of spanning forest identities on specific graphs and generic spanning forest identities, between AB pairs (certain pairs of set partitions) as indeterminates and as their interpretations on graphs.  However, we did not have the present framework to cleanly lay out these different pieces and their interrelations, making the arguments more difficult for the reader, and making it harder to think about genericity and in particular leading to overly optimistic expectations on forest identity-genericity as discussed in \ref{subsec genericity}.
 
We ask, then, whether what happens in Example~\ref{eg using the framework} is the only way that general spanning forest identities can be created; that is, if we allow ourselves to freely extend by our extension maps $\iota_m^{(n)}$, then given a spanning forest identity, does there exist some determinantal identity sent to it by the MTT map? 

We can capture this idea with a definition.
    Given parameters $(m,M)$, we say that $\tau_{n,M}$ is {\em full enough} with respect to $m$ if
    \begin{equation}\label{eq:mtt-det-identities:full-enough}
        \iota_{m,M}^{(n)}\pr{\ker \Phi_{m,M}^{(n)}} \subseteq \im \tau_{n,M}.
    \end{equation}

    Intuitively, if $\tau_{n,M}$ is full enough, it is not necessarily surjective, but its image is still large enough to contain our space of quadratic spanning forest identities (after freely extending by $\iota_{m,M}^{(n)}$). One can easily check that $\tau_{n,M}$ being full enough implies that $K_n$ is forest identity-generic with respect to the parameters $m$ and $M$. This is by design, as we have essentially defined the ``laziest'' way to derive forest identity-genericity of $K_n$ from Muir's law.

    This framework leads to the following unanswered questions. 

\begin{question}\label{q:mtt-det-identities:tau-full}
    Given parameters $(m,M)$, what is the minimal value of $n$ for which $\tau_{n,M}$ is full enough?
\end{question}

\begin{question}
    Given parameters $(m,M)$, if $K_n$ is forest identity-generic, is $\tau_{n,M}$ full enough?
\end{question}

Given specific values of $m$, $n$, and $M$, if one expresses $\Phi_{m,M}^{(n)}$, $\iota_{m,M}^{(n)}$, and $\tau_{n,M}$ as matrices with respect to the appropriate monomial bases for their domains and codomains, then checking (\ref{eq:mtt-det-identities:full-enough}) reduces to a matrix computation.
    We have verified via such a computation in SageMath that in the cases $(m,k,\ell) \in \{(5,2,3),(5,3,4)\}$, $\tau_{6,k,\ell}$ is full enough, implying that $K_6$ is forest identity-generic in both cases. In all other cases in which $m \le 5$, we also computed that $\tau_{5,k,\ell}$ is full enough.
    In general, we know that $\tau_{m,M}$ is generic when $k = \ell = 2$ and when $k = \ell = m-1$; these cases are vacuous since no spanning forest identities exist by Proposition \ref{prop:sf-identities:generic}.

%% file: narayana.tex
\section{The ranks of the linear MTT maps are Narayana}\label{sec:narayana}

Now that we have the framework of the previous section, we would like to use it.  While the quadratic case that originally motivated our investigations remains challenging, as a first use of the framework we can investigate the linear MTT maps $\tau_{m,k}$. In particular, we show that the rank of $\tau_{m,k}$ is the {\em $(m,k)$-th Narayana number}, $N(m,k)$, by exhibiting an explicit basis.

\begin{definition}
  A minor symbol $\delta(\alpha \mid \beta) \in \Delta_{m,k}$ is {\em standard} if $1 \in \alpha$, $1 \in \beta$, and $\alpha_i \leq \beta_i$ for all $i\in [k]$ where $\alpha = \{\alpha_1 < \ldots < \alpha_k\}$ and $\beta = \{\beta_1 < \ldots < \beta_k\}$.
  The set of all standard minor symbols in $\Delta_{m,k}$ is denoted by $\tilde{\Delta}_{m,k}$.
\end{definition}

The Narayana numbers count many interesting objects: in particular, the {\em $(m,k)$-th Narayana number}, denoted $N(m,k)$, is the number of Dyck paths with $2m$ steps and $k$ peaks.

\begin{proposition}\label{prop:narayana:count}
    $\ab{\tilde{\Delta}_{m,k}} = N(m,k)$.
\end{proposition}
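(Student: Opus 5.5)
We count pairs $(\alpha,\beta)$ of $k$-subsets of $[m]$ with $\alpha_1=\beta_1=1$ and $\alpha_i\le\beta_i$ for all $i$, and set up an explicit bijection with Dyck paths of semilength $m$ having exactly $k$ peaks. The standard encoding of a Dyck path with $k$ peaks records the positions of its peaks. Write the path as $U^{u_1}D^{d_1}U^{u_2}D^{d_2}\cdots U^{u_k}D^{d_k}$ with all $u_i,d_i\ge 1$. Put $a_j=u_1+\cdots+u_j$ and $b_j=d_1+\cdots+d_j$. Then $1\le a_1<\cdots<a_k=m$ and $1\le b_1<\cdots<b_k=m$, and the Dyck condition (never going below the axis) is exactly $b_j\le a_j$ for $j<k$; at $j=k$ it holds with equality. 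These inequalities need only be checked at valleys, since the height is minimised locally there.

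To match our normalisation, which fixes $1$ rather than $m$ as a common element, I would reverse and shift. Define $\alpha_i = m+1-a_{k+1-i}$ for $i\le k$, where we set $a_0:=0$ so that $a_{k+1-i}$ ranges over $a_k,\dots,a_1$. This gives $\alpha_1=m+1-a_k=1$, and the $\alpha_i$ increase with $i$. Define $\beta_i=m+1-b_{k+1-i}$ in the same way. The condition $b_j\le a_j$ becomes $\alpha_i\le\beta_i$, and the condition $a_k=b_k=m$ becomes $1\in\alpha\cap\beta$.

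Conversely, given a standard symbol $\delta(\alpha\mid\beta)$, reverse the construction. Set $a_j=m+1-\alpha_{k+1-j}$ and $u_j=a_j-a_{j-1}$, and similarly $d_j=b_j-b_{j-1}$. All $u_j$ and $d_j$ are positive because the $\alpha_i$ are distinct, the $\beta_i$ are distinct, and all lie in $[m]$. The resulting word has $m$ up steps, $m$ down steps and exactly $k$ peaks. It stays weakly above the axis because $b_j\le a_j$ at every valley. The two maps are mutually inverse by construction. This gives $\ab{\tilde\Delta_{m,k}}=N(m,k)$.

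The only real obstacle is the index bookkeeping. One must check that the sequence of valley heights is exactly $a_j-b_j$, so that the Dyck condition is equivalent to the componentwise inequality after reversal. One must also check that the boundary constraints match correctly: the final peak-run totals equal $m$, which corresponds to $1\in\alpha$ and $1\in\beta$, and none of the $\alpha_i,\beta_i$ falls outside $[m]$. As an alternative, one could count directly. The number of pairs of chains with $\alpha_i\le\beta_i$ is a Lindström–Gessel–Viennot-style $2\times 2$ determinant, $\binom{m-1}{k-1}^2-\binom{m-1}{k}\binom{m-1}{k-2}$, which simplifies to $\frac1m\binom mk\binom m{k-1}$. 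The bijection above is cleaner, so it is the route I would take.
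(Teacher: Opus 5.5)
Your proof is correct and is essentially the paper's argument. Your partial sums $(b_j,a_j)$ are exactly the valley coordinates. The paper reads the valleys of a path from $(1,1)$ to $(m+1,m+1)$ directly as $(\alpha_{i+1},\beta_{i+1})$, so your bijection is the paper's one composed with reversing the Dyck path, and the paper's normalisation avoids the reversal step. One cosmetic slip: the convention $a_0:=0$ plays no role in defining $\alpha_i$; it is only needed for $u_1=a_1-a_0$ in the inverse map.
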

\begin{proof}
  Note that a Dyck path with $k$ peaks has $k-1$ valleys and is determined by its valleys and endpoint.  We will view our Dyck paths as going from $(1,1)$ to $(m+1, m+1)$ using north and east steps and staying weakly above the diagonal $y=x$.  Then given $\delta(\alpha \mid \beta) \in \tilde{\Delta}_{m,k}$ with  $\alpha = \{\alpha_1  < \ldots < \alpha_k\}$ and $\beta = \{\beta_1 < \ldots < \beta_k\}$, let the valleys of the corresponding Dyck path be $(\alpha_{i+1}, \beta_{i+1})$ for $i = 1, \ldots, k-1$.  This gives the desired bijection.
\end{proof}

\subsection{Two types of MTT relations}\label{subsec:narayana:mtt-relations}

In this section, we identify two families of MTT relations, i.e., elements of $\ker \tau_{m,k}$, which we will use to show that our proposed basis of $\im \tau_{m,k}$ is a spanning set.

The first family of relations comes from the operation of changing one endpoint of a matching while keeping the rest of the matching fixed.  To correctly keep track of the signs, we will need to fix parameters as follows.
\begin{definition}
    Fix $\alpha, \beta' \subseteq [m]$, where $\ab{\alpha} = k$ and $\ab{\beta'} = k+1$. Let $\mb{P}_{\alpha, \beta'}$ be the set of all MTT matchings $\sigma : \alpha \to \beta' \setminus b$, where $b \in \beta'$. We say that $\sigma, \sigma' \in \mb{P}_{\alpha, \beta'}$ {\em differ by a pivot} if they agree at exactly $k-1$ points of $\alpha$. We let the {\em pivot sign} of an element $\sigma : \alpha \to \beta' \setminus b$ of $\mb{P}_{\alpha, \beta'}$ be $(-1)^{\eta(\sigma)+\eta(b, \beta')}$.
\end{definition}

\begin{example}
    Let $\alpha = 1347$ and $\beta' = 12356$.  Let $\sigma$ and $\sigma'$ be as illustrated below. Then $\sigma, \sigma' \in \mb{P}_{\alpha, \beta'}$ differ by a pivot. $\sigma$ has pivot sign $(-1)^{\eta(\sigma)+\eta(5,12356)} = (-1)^{2 + 3} = -1$ and $\sigma'$ has pivot parity $(-1)^{\eta(\sigma') + \eta(2,12356)} = (-1)^{1 + 1} = 1$.
    \begin{center}
        \includegraphics{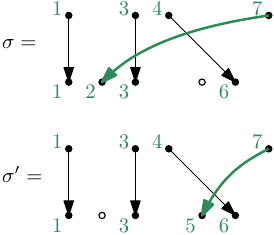}
    \end{center}
\end{example}

We now show that pivots always reverse pivot sign.

\begin{lemma}\label{lem pivot sign}
    If $\sigma, \sigma' \in \mb{P}_{\alpha,\beta'}$ differ by a pivot, then they have opposite pivot sign.
\end{lemma}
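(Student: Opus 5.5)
Write $\sigma:\alpha\to\beta'\setminus b$ and $\sigma':\alpha\to\beta'\setminus b'$, and suppose they agree everywhere except at a single point $a\in\alpha$. Since both are bijections onto sets that differ in only one element, we must have $\sigma(a)=b'$ and $\sigma'(a)=b$ with $b\neq b'$. Everything else is common: the two matchings share the restriction $\rho=\sigma|_{\alpha\setminus a}=\sigma'|_{\alpha\setminus a}$, whose image is $\beta'\setminus\{b,b'\}$. (The MTT condition plays no role in the sign computation; it only restricts which pivots exist.) The plan is to compute $\eta(\sigma)-\eta(\sigma')$ and $\eta(b,\beta')-\eta(b',\beta')$ modulo $2$ and show that their sum is odd.

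First I will handle the matching inversions. Inversions of $\sigma$ not involving $a$ are exactly the inversions of $\rho$, so they are common to $\sigma$ and $\sigma'$ and cancel. It remains to count, for each $i\in\alpha\setminus a$, whether the pair $\{i,a\}$ is an inversion. In $\sigma$ it is an inversion iff $i<a$ and $\rho(i)>b'$, or $i>a$ and $\rho(i)<b'$. In $\sigma'$ the same holds with $b$ in place of $b'$.

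Assume without loss of generality that $b<b'$. For a given $i$, the inversion status changes between $\sigma$ and $\sigma'$ exactly when $\rho(i)$ lies strictly between $b$ and $b'$, and this holds regardless of whether $i<a$ or $i>a$. Because $\rho$ is a bijection onto $\beta'\setminus\{b,b'\}$, the number of such $i$ equals $c:=|\{y\in\beta': b<y<b'\}|$. Hence $\eta(\sigma)\equiv\eta(\sigma')+c \pmod 2$.

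Next I will handle the set inversions. Reading the definition, $\eta(b,\beta')$ is the number of elements of $\beta'\setminus b$ that are smaller than $b$, that is, the position of $b$ in $\beta'$ minus one. The positions of $b$ and $b'$ in $\beta'$ differ by exactly $c+1$, so $\eta(b',\beta')-\eta(b,\beta')=c+1$.

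Combining the two counts, the exponents of the pivot signs differ by $c+(c+1)=2c+1$, which is odd. So $\sigma$ and $\sigma'$ have opposite pivot signs.

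The main obstacle is getting the case analysis in the matching step right. One must check that when $\rho(i)$ lies outside the interval $(b,b')$, the relative order of $\rho(i)$ with $b$ and with $b'$ is the same, so the inversion status does not change. One must also check that when $\rho(i)$ lies inside the interval, the status flips in both subcases $i<a$ and $i>a$. Both facts follow directly from the inequalities.
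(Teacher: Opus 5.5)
Your proof is correct and takes essentially the same route as the paper. Both arguments compare $\eta(\sigma)$ with $\eta(\sigma')$ by counting the elements of $\beta'$ strictly between the two pivoted endpoints, and compare the set-inversion terms via the $c+1$ elements of $\beta'$ in the half-open interval from $b$ to $b'$, so the total parity is odd; your version just writes out the $i<a$ versus $i>a$ check that the paper leaves as ``can be shown.''
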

\begin{proof}
  Suppose that $\sigma$ and $\sigma'$ agree on $\alpha \setminus a$ and that $\sigma(a)=b$, $\sigma'(a) = b'$.
  Observe that modulo 2, $\eta(\sigma) + \eta(\sigma')$ is the number of crossings created or destroyed when moving from $\sigma$ to $\sigma'$, which can be shown to be the number of elements of $\beta$ strictly between $b$ and $b'$.
 On the other hand, assuming without loss of generality that $b < b'$, $\eta(b, \beta') + \eta(b', \beta')$ is (modulo 2) the number of elements of $\beta'$ strictly less than $b'$ and weakly greater than $b$. Thus, $\eta(\sigma) + \eta(\sigma') + \eta(b,\beta') + \eta(b',\beta')$ is 1 modulo 2, so $\sigma$ and $\sigma'$ have opposite pivot sign.
\end{proof}

\begin{proposition}\label{prop:narayana:pivot-rels}
    Given $\alpha, \beta' \subseteq [m]$ such that $\ab{\alpha} = k$ and $\ab{\beta'} = k+1$,
    \begin{align*}
        \sum_{b \in \beta'} \eps(b,[m]) \eps(b,\beta') \delta(\alpha \mid \beta' \setminus b) \in \ker \tau_{m,k}.
    \end{align*}
\end{proposition}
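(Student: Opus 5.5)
The plan is to expand $\tau_{m,k}$ of the given sum directly, regroup the result as a linear combination of set partitions $A\in\Pi_{m,k}$, and show that the coefficient of each $A$ vanishes. The cancellation will come from a sign-reversing pairing built from pivots, using Lemma~\ref{lem pivot sign}.

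\emph{Step 1: reduce the sign to the pivot sign.} By the definition of $\tau_m$, the image of the sum is
\[
\sum_{b\in\beta'}\ \sum_{\sigma\sim\delta(\alpha\mid\beta'\setminus b)} \eps(b,[m])\,\eps(b,\beta')\,\eps(\alpha,[m])\,\eps(\beta'\setminus b,[m])\,\eps(\sigma)\sum_{A\succcurlyeq\sigma}A .
\]
I would compare $\eta(\beta'\setminus b,[m])$ with $\eta(\beta',[m])$ by counting the inversion pairs involving $b$.
\begin{itemize}
\item Removing $b$ destroys the pairs $(b,j)$ with $j<b$ and $j\notin\beta'$. There are $(b-1)-\eta(b,\beta')$ of these.
\item Removing $b$ creates the pairs $(i,b)$ with $i\in\beta'\setminus b$ and $i>b$. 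There are $k-\eta(b,\beta')$ of these.
\end{itemize}
Hence $\eta(\beta'\setminus b,[m])\equiv\eta(\beta',[m])+b-1+k \pmod 2$. Since $\eps(b,[m])=(-1)^{b-1}$, this gives $\eps(b,[m])\,\eps(\beta'\setminus b,[m])=(-1)^k\eps(\beta',[m])$, which does not depend on $b$. So, up to the global sign $(-1)^k\eps(\alpha,[m])\eps(\beta',[m])$, the coefficient attached to the pair $(b,\sigma)$ is $\eps(\sigma)\eps(b,\beta')$. This is exactly the pivot sign of $\sigma\in\mb{P}_{\alpha,\beta'}$.

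\emph{Step 2: describe which $(b,\sigma)$ contribute to a fixed $A$.} Fix $A\in\Pi_{m,k}$. I would describe the set of $\sigma\in\mb{P}_{\alpha,\beta'}$ with $A\succcurlyeq\sigma$.
\begin{itemize}
\item The $k$ blocks of $\sigma$ each contain exactly one element of $\alpha$ and must lie in distinct blocks of $A$. So if such a $\sigma$ exists, each block of $A$ contains exactly one element of $\alpha$ and at least one element of $\beta'\setminus b$.
\item Since $|\beta'|=k+1$, there is a unique block $P$ of $A$ containing two elements $b_1,b_2$ of $\beta'$, and every other block contains exactly one. The removed element $b$ must then lie in $\{b_1,b_2\}$.
\item Once $b$ is chosen, $\sigma$ is forced: it sends the $\alpha$-element of each block to the unique $(\beta'\setminus b)$-element of that block.
\end{itemize}

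\emph{Step 3: check the MTT condition and conclude.} I would verify that both choices $b=b_1$ and $b=b_2$ give MTT matchings. Outside $P$ this is automatic. Inside $P$, let $a$ be its $\alpha$-element. If $a\notin\{b_1,b_2\}$ there is nothing to check. If $a=b_1$, then $b=b_2$ gives $\sigma(a)=a$, while $b=b_1$ removes $a$ from $\beta'\setminus b$, so no fixing is required. Either way, exactly two elements of $\mb{P}_{\alpha,\beta'}$ are extended by $A$. They agree off $a$ and differ at $a$, so they differ by a pivot. By Lemma~\ref{lem pivot sign} they have opposite pivot signs, so the coefficient of $A$ is $0$. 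If no $\sigma$ exists, the coefficient is trivially $0$.

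The main obstacle I expect is the sign bookkeeping in Step 1. The combinatorial pairing in Steps 2 and 3 is straightforward once that identity is in place, and the only care needed there is the MTT edge case in which $a\in\beta'$.
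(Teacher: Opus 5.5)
Your proposal is correct and is essentially the paper's argument: the paper pairs each term $(b,\sigma,A)$ with the term obtained by the same pivot (swapping $b$ with the other element of $\beta'$ in its block of $A$), as a sign-reversing involution, and applies Lemma~\ref{lem pivot sign}. Your Step 1 identity $\eps(b,[m])\eps(\beta'\setminus b,[m])=(-1)^k\eps(\beta',[m])$ is a global form of the parity congruence that the paper only asserts, and your Steps 2--3 spell out the well-definedness and MTT checks that the paper leaves brief.
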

\begin{proof}
    Fix $\alpha, \beta'$ as in the statement. We want to show
    \begin{align*}
        0 &= \sum_{b \in \beta'} \sum_{\sigma \sim \delta(\alpha \mid \beta' \setminus b)} \sum_{A \succcurlyeq \sigma} \eps(b,[m]) \eps(b,\beta') \eps(\alpha, [m]) \eps(\beta' \setminus b, [m]) \eps(\sigma) A,
    \end{align*}
    which we can do by creating a sign-reversing involution between the terms of this sum. To do this, take a term in the sum.  The information giving that term is a set partition $A \in \Pi_{m,k}$ appearing in this summation and the fact that it grows from some MTT matching $\sigma$ on $\delta(\alpha \mid \beta' \setminus b)$, where $b \in \beta'$. Notice that since $A$ has $k$ blocks, each containing a unique element of $\beta \setminus b$, there exists a unique $b' \in \beta \setminus b$ that shares a block with $b$. Now let $a := \sigma^{-1}(b')$, and let $\sigma' : \alpha \to \beta' \setminus b'$ be the matching that agrees with $\sigma$ on $\alpha \setminus a$ such that $\sigma'(a) = b$; clearly, $\sigma$ and $\sigma'$ differ by a pivot. Furthermore, notice that $A$ grows from $\sigma'$, which is also an MTT matching on $\delta(\alpha \mid \beta' \setminus b')$, where $b' \in \beta'$.  Define our sign reversing involution to map the term of the first instance of $A$ in our summation to the term of this new instance of $A$; this mapping is well-defined by uniqueness of $b'$, and it is an involution since applying it twice performs a pivot on the same edge. To see that it reverses signs, we calculate
    \begin{align*}
        &\epsilon(b,[m])\epsilon(b, \beta')\epsilon(\alpha, [m])\epsilon(\beta' \setminus b, [m])\epsilon(\sigma) 
      \epsilon(b',[m])\epsilon(b', \beta')\epsilon(\alpha, [m])\epsilon(\beta' \setminus b', [m])\epsilon(\sigma')\\
      & = \epsilon(b,[m])\epsilon(b, \beta')\epsilon(\beta' \setminus b, [m])\epsilon(\sigma) 
      \epsilon(b',[m])\epsilon(b', \beta')\epsilon(\beta' \setminus b', [m])\epsilon(\sigma') \\
      & = - \epsilon(b,[m]\epsilon(\beta' \setminus b, [m]) 
      \epsilon(b',[m])\epsilon(\beta' \setminus b', [m]) \tag{by Lemma~\ref{lem pivot sign}} \\
      & = -1
    \end{align*}
    since one can directly show that
    \begin{align*}
        \eta(\beta' \setminus b, [m]) + \eta(\beta' \setminus b', [m])
        &\equiv \eta(b, [m]) + \eta(b', [m])
    \end{align*}
    modulo 2.
\end{proof}

The second family of relations comes from the operation of taking one pairing in a matching and flipping which end belongs to which set.  We again need to fix some parameters to correctly keep track of the signs.
\begin{definition}\label{def:narayana:flip}
    Fix disjoint subsets $\gamma, \alpha, \beta, Q \subseteq [m]$ such that $\ab{\gamma} + \ab{\alpha} + \ab{\beta} \le k-1$ and $\ab{Q} = 2k - 2\ab{\gamma} - \ab{\alpha} - \ab{\beta}$. Let $\mb{F}_{\gamma, \alpha, \beta, Q}$ be the set of all MTT matchings $\sigma : \gamma \alpha R \to \gamma \beta S$, where $R \subseteq Q$ is of size $k - \ab{\gamma} - \ab{\alpha}$ and $S = Q \setminus R$. We say that two elements $\sigma : \gamma \alpha R \to \gamma \beta S$ and $\sigma' : \gamma \alpha R' \to \gamma \beta S'$ of $\mb{F}_{\gamma, \alpha, \beta, Q}$ {\em differ by a flip} if there exist $r \in R \cap S'$ and $s \in S \cap R'$ such that
    \begin{enumerate}
        \item $R \setminus r = R' \setminus s$ and $S \setminus s = S' \setminus r$;
        \item $\sigma$ and $\sigma'$ agree on $\gamma \alpha R \setminus r$;
        \item $\sigma(r) = s$ and $\sigma'(s) = r$.
    \end{enumerate}
    We let the {\em flip sign} of an element $\sigma : \gamma \alpha R \to \gamma \beta S$ of $\mb{F}_{\gamma, \alpha, \beta, Q}$ be $(-1)^{\eta(\sigma) + \eta(\alpha R, \alpha \beta Q)}$.
\end{definition}

\begin{example}
    Let $m = 7$, $k = 4$, $\gamma = 3$, $\alpha = 1$, $\beta = 47$, and $Q = 256$. Let $\sigma$ and $\sigma'$ be as illustrated below. Then $\sigma, \sigma' \in \mb{F}_{\gamma,\alpha, \beta, Q}$ differ by a flip. $\sigma$ has flip sign $(-1)^{\eta(\sigma) + \eta(126, 124567)} = (-1)^{5 + 2} = -1$ and $\sigma'$ has flip sign $(-1)^{\eta(\sigma') + \eta(156, 124567)} = (-1)^{4 + 4}  = 1$.
    \begin{center}
        \includegraphics{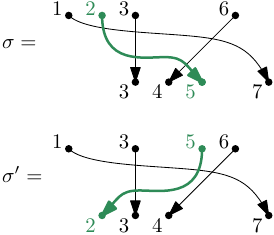}
    \end{center}
\end{example}

\begin{lemma}\label{lem:narayana:flip-parity}
    If $\sigma, \sigma' \in \mb{F}_{\gamma, \alpha, \beta, Q}$ differ by a flip, then they have opposite flip sign.
\end{lemma}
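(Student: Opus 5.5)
We compare the two exponents $\eta(\sigma)+\eta(\alpha R,\alpha\beta Q)$ and $\eta(\sigma')+\eta(\alpha R',\alpha\beta Q)$ modulo $2$. As in the proof of Lemma~\ref{lem pivot sign}, the plan is to show that the change in the matching term and the change in the subset term differ by exactly one. Throughout, $\sigma$ and $\sigma'$ are as in Definition~\ref{def:narayana:flip}: they agree on $\gamma\alpha R\setminus r$, with $\sigma(r)=s$ and $\sigma'(s)=r$. We have $R'=(R\setminus r)\cup s$ and $S'=(S\setminus s)\cup r$. Without loss of generality, assume $r<s$.

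\emph{Step 1 (the subset term).} The set $\alpha R'$ is obtained from $\alpha R$ by replacing $r$ with $s$ inside the ambient set $T:=\alpha\beta Q$, and both $r,s\in T$. Let $c$ be the number of elements of $T$ strictly between $r$ and $s$. Moving one element from position $r$ to position $s$ gives
\[
\eta(\alpha R',T)-\eta(\alpha R,T)\equiv c \pmod 2 .
\]
To check this, split $c=c_1+c_2$, where $c_1$ counts elements of $\alpha R\setminus r$ between $r$ and $s$ and $c_2$ counts elements of $T\setminus \alpha R'$ between them. The swap changes the inversion count by $c_2-c_1\equiv c$. This is the routine counting already used for $\eta(b,\beta')+\eta(b',\beta')$.

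\emph{Step 2 (the matching term).} We view each matching as a set of chords. Both matchings consist of the common pairs $\{x,\sigma(x)\}$ for $x\in\gamma\alpha R\setminus r$, together with one distinguished pair: $r\mapsto s$ in $\sigma$ and $s\mapsto r$ in $\sigma'$. Inversions of a matching are pairs of chords that cross, in the sense of $i<j$ with $\sigma(i)>\sigma(j)$. Inversions among common chords are unchanged. For a common chord $x\mapsto y$, we compare whether it forms an inversion with $r\mapsto s$ and with $s\mapsto r$. For $x\mapsto y$ and $r\mapsto s$ this means $(x-r)(y-s)<0$, and for $s\mapsto r$ it means $(x-s)(y-r)<0$. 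A short case analysis on where $x$ and $y$ lie relative to $r<s$ should show the following. The two conditions have the same truth value when $x$ and $y$ are both outside $(r,s)$, or when both are inside $(r,s)$. They differ exactly when precisely one of $x,y$ lies strictly between $r$ and $s$. Hence $\eta(\sigma)+\eta(\sigma')$ is congruent to the number of common chords with exactly one endpoint strictly between $r$ and $s$. Fixed points $x\in\gamma$ are chords with $x=y$, so they contribute nothing, which is consistent.

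\emph{Step 3 (combining).} The elements strictly between $r$ and $s$ split as follows. The common chords with both ends in $(r,s)$ contribute an even number of points, and chords with exactly one end there contribute one point each. So modulo $2$, the count from Step 2 equals the number of elements of $(\gamma\alpha\beta Q\setminus\{r,s\})$ lying in $(r,s)$. From this we subtract, modulo $2$, twice the number of $\gamma$ elements in the interval, because each $\gamma$ element is both a source and a target but is a single point. Put more carefully, each non-$\gamma$ point of $\alpha\beta Q\setminus\{r,s\}$ is an endpoint of exactly one common chord. Therefore Step 2 gives $\eta(\sigma)+\eta(\sigma')\equiv c'$, where $c'$ is the number of elements of $(\alpha\beta Q\setminus\{r,s\})\cap(r,s)$, adjusted by the parity from both-inside chords, which is even. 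Since $c'=c$, the sum of the two exponents is congruent to $c+c$. This still needs the extra $+1$.

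The missing $+1$ must come from the distinguished chord itself, and this is the main obstacle. I would recheck Step 1: for $r<s$, replacing $r$ by $s$ also changes whether the pair $(s,r)$ is an inversion, since $s\in\alpha R'$ and $r\notin\alpha R'$ with $r<s$. That contributes one extra inversion, so the subset term changes by $c+1$. The total is then $2c+1\equiv 1$, giving opposite flip signs. I expect the bookkeeping in Steps 1 and 2 (strict versus weak betweenness, and the treatment of $\gamma$) to be the delicate part. I would verify the final parity against the worked example before writing the argument out.
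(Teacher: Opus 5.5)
Your argument, once you keep your own corrections, is correct and is the paper's proof: the matching term changes by $|\alpha\beta Q\cap(r,s)|$ modulo $2$ (your chord case analysis, counting only the non-$\gamma$ points, since fixed chords contribute nothing), and the subset term changes by $|\alpha\beta Q\cap[r,s)|=c+1$, where the extra $1$ is the new inversion $(s,r)$. In the write-up, replace the Step~1 formula $c_2-c_1$ by the exact change $c_1+c_2+1$, and drop the first version of Step~3: counting the $\gamma$-points in $(r,s)$ and then ``subtracting twice'' them does not fix the parity, so they must simply be excluded, as your ``put more carefully'' version does.
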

\begin{proof}
    Suppose $\sigma : \gamma \alpha R \to \gamma \beta S$ and $\sigma' : \gamma \alpha R' \to \gamma \beta S'$ differ by a flip, where we let $R, S, R', S', r, s$ be as in Definition \ref{def:narayana:flip}. Assume without loss of generality that $r < s$ (the case where $s < r$ holds symmetrically). As in the proof of Lemma~\ref{lem pivot sign}, we are counting how crossings change, hence the numbers of elements in certain intervals. Specifically, modulo 2, we have
    \begin{align*}
        \eta(\sigma) + \eta(\sigma')
        &\equiv \ab{\alpha \beta Q \cap (r,s)} \\
        \eta(\alpha R, \alpha \beta Q) + \eta(\alpha R', \alpha \beta Q)
        &\equiv \ab{\alpha \beta Q \cap [r,s)},
    \end{align*}
    where $(r,r')$ (resp. $[r,r')$) denotes the open (resp. half-open) interval with endpoints $r$ and $s$. Adding these together, we see that $\sigma$ and $\sigma'$ have opposite flip signs.
\end{proof}

\begin{proposition}\label{prop:narayana:flip-rels}
    Suppose $\gamma, \alpha, \beta, Q \subseteq [m]$ are disjoint such that $\ab{\gamma} + \ab{\alpha} + \ab{\beta} \le k-1$ and $\ab{Q} = 2k - 2\ab{\gamma} - \ab{\alpha} - \ab{\beta}$. Then
    \begin{align*}
        \sum_{Q = R \sqcup S} \eps(\alpha R, \alpha \beta Q) \delta(\gamma \alpha R \mid \gamma \beta S) \in \ker \tau_{m,k},
    \end{align*}
    where the sum is over all subsets $R \subseteq Q$ of size $k - \ab{\gamma} - \ab{\alpha}$ and $S = Q \setminus R$.
\end{proposition}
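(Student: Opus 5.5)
We mirror the proof of Proposition~\ref{prop:narayana:pivot-rels} and build a sign-reversing involution on the terms of the expanded sum. Applying $\tau_{m,k}$ to the given element, we must show
\begin{align*}
    0 = \sum_{Q = R \sqcup S} \sum_{\sigma \sim \delta(\gamma\alpha R \mid \gamma\beta S)} \sum_{A \succcurlyeq \sigma} \eps(\alpha R, \alpha\beta Q)\, \eps(\gamma\alpha R, [m])\, \eps(\gamma\beta S, [m])\, \eps(\sigma)\, A .
\end{align*}
A term is a pair $(\sigma, A)$ with $\sigma \in \mb{F}_{\gamma,\alpha,\beta,Q}$ and $A \in \Pi_{m,k}$ growing from $\sigma$.

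First we note the structure of $\sigma$. Since $\gamma\alpha R \cap \gamma\beta S = \gamma$, the matching fixes $\gamma$ and sends $\alpha R$ bijectively onto $\beta S$. Counting gives $|\alpha R| = k - |\gamma| \ge |\alpha| + |\beta| + 1$. Hence at least one pair of $\sigma$ goes from $R$ to $S$: otherwise every element of $R$ would map into $\beta$ and every element of $\alpha$ into $\beta \cup S$, which forces $|\alpha R| \le |\beta| + |S|$. A cleaner count is this: the pairs touching $\alpha \cup \beta$ number at most $|\alpha| + |\beta| < |\alpha R|$, so some pair lies entirely inside $R \times S$.

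The involution uses a canonical choice of such a pair. Let $(r, \sigma(r))$ be the $R \to S$ pair with $\min(r, \sigma(r))$ smallest. Set $s = \sigma(r)$ and define $\sigma'$ by the flip in Definition~\ref{def:narayana:flip}, namely $R' = (R \setminus r) \cup s$ and $\sigma'(s) = r$. Then $\sigma' \in \mb{F}_{\gamma,\alpha,\beta,Q}$ and differs from $\sigma$ by a flip. Both matchings define the same partial set partition, since the block $\{r, s\}$ is unchanged, so $A$ grows from $\sigma'$ as well.

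Applying the rule to $\sigma'$ selects the same pair. The set of unordered pairs lying in $Q$ is unchanged by the flip, and every such pair goes $R' \to S'$ in $\sigma'$ exactly when it went $R \to S$ in $\sigma$. The pairs inside $Q$ are precisely the $R$–$S$ pairs, because $\gamma$, $\alpha$ and $\beta$ are disjoint from $Q$. The map $(\sigma, A) \mapsto (\sigma', A)$ is therefore a well-defined fixed-point-free involution.

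It remains to check that the involution reverses signs. Lemma~\ref{lem:narayana:flip-parity} already gives that $(-1)^{\eta(\sigma) + \eta(\alpha R, \alpha\beta Q)}$ flips. We then need
\begin{align*}
    \eps(\gamma\alpha R, [m])\,\eps(\gamma\beta S, [m]) = \eps(\gamma\alpha R', [m])\,\eps(\gamma\beta S', [m]).
\end{align*}
The right side is obtained from the left by moving $r$ from the first set to the second and moving $s$ in the opposite direction. For any set $X$ not containing $x$, we have $\eta(X \cup x, [m]) \equiv \eta(X, [m]) + \eta(x, [m]) + (\text{number of elements of } X \text{ other than } x \text{ relating to } x)$ modulo $2$. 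Working this out, the total change modulo $2$ is $\eta(r,[m]) + \eta(s,[m])$ added and removed once each, plus interval counts that cancel. This works because the two sets are $\gamma\alpha(R \cap R')$ and $\gamma\beta(S \cap S')$, each with one of $r, s$ adjoined, and their union is fixed. Concretely, we use $\eta(X, [m]) \equiv \sum_{x \in X} (x - 1) - \binom{|X|}{2}$, which gives $\eta(X,[m]) \equiv \sum_{x\in X} x + \text{const}(|X|)$. Sizes are preserved and the sum $\sum x$ over both sets combined is preserved, but each individual sum changes by $\pm(s - r)$. Both change by $s - r$ up to sign, so the total change is $2(s-r) \equiv 0$. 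We expect this parity bookkeeping to be the main place where errors can slip in. With it, every pair of terms cancels and the sum vanishes.
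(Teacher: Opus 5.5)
Your proposal is correct and follows essentially the same route as the paper. The paper uses the same sign-reversing involution, flipping the $R$--$S$ pair of $\sigma$ with the smallest endpoint in $Q$. It gets the sign reversal the same way, from Lemma~\ref{lem:narayana:flip-parity} together with the fact that $\eta(X,[m]) \equiv \sum_{x\in X} x$ plus a constant depending only on $|X|$, modulo $2$. Your count that at most $|\alpha|+|\beta| < |\alpha R|$ pairs touch $\alpha\cup\beta$, so some pair lies in $R\times S$, is a cleaner justification of that existence step than the paper's bound on $|Q|$.
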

\begin{proof}
    Fix $\gamma$, $\alpha$, $\beta$, and $Q$ as in the statement. We want to show
    \begin{align*}
        0 = \sum_{Q = R \sqcup S}
        \sum_{\sigma \sim \delta(\gamma \alpha R \mid \gamma \beta Q)}
        \sum_{A \succcurlyeq \sigma}
        \eps(\alpha R, \alpha \beta Q)
        \eps(\gamma \alpha R, [m])
        \eps(\gamma \beta S, [m])
        \eps(\sigma) A,
    \end{align*}
    which we again do by creating a sign-reversing involution between the terms of this sum. To do this, take a term in the sum.  That is, take a set partition $A \in \Pi_{m,k}$ appearing in the summation and the information of an MTT matching $\sigma$ on $\delta(\gamma \alpha R \mid \gamma \beta S)$ which it extends, where $R \subseteq Q$ has size $k - \ab{\gamma} - \ab{\alpha}$ and $S = Q \setminus R$. Since $\ab{\gamma} + \ab{\alpha} + \ab{\beta} \le k-1$, we have that $\ab{Q} \ge k - \ab{\gamma} + 1$, so there must exist some $r \in R$ and $s \in S$ such that $\sigma(r) = s$. Because of this, we can choose $r$ and $s$ so that the edge between them has a minimal endpoint among all edges with both endpoints in $Q$.

    Let $\sigma' : \gamma \alpha R' \to \gamma \beta S'$ differ from $\sigma$ by the flip that swaps $r$ and $s$. Notice that $A$ also grows from $\sigma'$, which one can also show is an MTT matching. We now map the term of our first instance of $A$ in our summation the term of this new instance of $A$; this mapping is an involution since performing it twice will flip the same edge. To see that it reverses sign, calculate
    \begin{align*}
        &\epsilon(\alpha R, \alpha \beta Q)\epsilon(\gamma \alpha R, [m])\epsilon(\gamma \beta S, [m])\epsilon(\sigma) 
      \epsilon(\alpha R', \alpha \beta Q)\epsilon(\gamma \alpha R', [m])\epsilon(\gamma \beta S', [m])\epsilon(\sigma') \\
      & = - \epsilon(\gamma \alpha R, [m])\epsilon(\gamma \beta S, [m])\epsilon(\gamma \alpha R', [m])\epsilon(\gamma \beta S', [m]) \tag{by Lemma~\ref{lem:narayana:flip-parity}} \\
      & = -1
    \end{align*}
    since
    \begin{align*}
        \eta(\gamma \alpha R, [m]) + \eta(\gamma \beta S, [m]) + \eta(\gamma \alpha R', [m]) + \eta(\gamma \beta S', [m])
        \equiv \sum_{x \in \gamma \alpha R} x + \sum_{x \in \gamma \beta S} x + \sum_{x \in \gamma \alpha R'} x + \sum_{x \in \gamma \beta S'} x
        \equiv 0
    \end{align*}
    modulo 2.
\end{proof}

\begin{proposition}\label{prop:narayana:span}
    $\tau_{m,k}(\tilde{\Delta}_{m,k})$ spans $\im \tau_{m,k}$.
\end{proposition}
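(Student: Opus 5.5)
The approach is to show that every minor symbol $\delta(\alpha \mid \beta) \in \Delta_{m,k}$ is congruent, modulo $\ker \tau_{m,k}$, to a linear combination of standard minor symbols. Applying $\tau_{m,k}$ then gives that $\tau_{m,k}(\tilde{\Delta}_{m,k})$ spans $\im \tau_{m,k}$, since $\im \tau_{m,k}$ is spanned by the images of all symbols in $\Delta_{m,k}$. The relations used are the pivot relations of Proposition~\ref{prop:narayana:pivot-rels} and the flip relations of Proposition~\ref{prop:narayana:flip-rels}. Each is used as a rewriting rule: a relation with one ``bad'' term and all other terms ``smaller'' lets us replace the bad term. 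The argument is an induction on a suitable order on $\Delta_{m,k}$.

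The first step arranges $1 \in \alpha$ and $1 \in \beta$.

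\begin{itemize}
\item If $1 \notin \beta$, apply the pivot relation with the given $\alpha$ and $\beta' = \beta \cup \{1\}$. This expresses $\delta(\alpha \mid \beta)$, up to sign, as a combination of the symbols $\delta(\alpha \mid \beta' \setminus b)$ with $b \neq 1$, all of which contain $1$ in the second set.
\item By the symmetry $\delta(\alpha \mid \beta) \leftrightarrow \delta(\beta \mid \alpha)$ of the MTT map, we can likewise arrange $1 \in \alpha$. This transposed version of the pivot relation is obtained from Proposition~\ref{prop:narayana:pivot-rels} by the same sign-reversing involution argument, with rows and columns exchanged.
\item Adding $1$ to $\beta$ does not remove it from $\alpha$, so after both moves $1 \in \alpha \cap \beta$.
\end{itemize}

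The second step arranges the dominance condition $\alpha_i \le \beta_i$ for all $i$. Set $\gamma = \alpha \cap \beta$, which contains $1$, and let $Q$ be the symmetric difference $(\alpha \setminus \beta) \cup (\beta \setminus \alpha)$. Apply the flip relation with $\alpha$- and $\beta$-parameters empty, so that $|Q| = 2k - 2|\gamma|$. The hypothesis $|\gamma| \le k-1$ holds whenever the symbol is not diagonal, and diagonal symbols are already standard.

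This relation expresses $\delta(\gamma R_0 \mid \gamma S_0)$ as a signed sum over all splittings $Q = R \sqcup S$ with $|R| = |S|$. Among these, the splitting in which $\gamma R$ is dominated by $\gamma S$ is a ballot-type condition. The plan is to use the relation to isolate a non-dominated splitting in terms of others, ordered, for example, lexicographically by the sorted $R$. This is exactly the Catalan-style reduction behind the Narayana count in Proposition~\ref{prop:narayana:count}.

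The main obstacle is that a single flip relation contains many dominated splittings, so it does not by itself express one non-dominated symbol in terms of dominated ones. The relations must be organized so that the system is triangular. The first thing to try is to take the nonempty $\alpha, \beta$ parameters of Proposition~\ref{prop:narayana:flip-rels} to be a prefix of the structure already fixed. Concretely, choose the parameters as follows:

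\begin{itemize}
\item Let $t$ be the first index where dominance fails, with $\alpha_t > \beta_t$.
\item Put the elements of $\alpha \setminus \gamma$ and $\beta \setminus \gamma$ below the critical point into the parameters $\alpha$ and $\beta$, and put $\gamma$ there as well.
\item Let $Q$ consist of $\beta_t$ together with the elements above it.
\end{itemize}

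Then the relation's terms are $\delta(\alpha \mid \beta)$ together with symbols whose first failure index is strictly larger, or which move an element smaller. An induction on the first failure index, and within that on $\sum_i \alpha_i - \sum_i \beta_i$, then terminates at standard symbols. I expect verifying this triangularity, and checking that the size constraint $|\gamma| + |\alpha| + |\beta| \le k-1$ holds for these parameter choices, to be the delicate part of the argument.
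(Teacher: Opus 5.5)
Your first step (pivot relations to get $1\in\alpha\cap\beta$) matches the paper. Your appeal to the transpose symmetry $\tau_m(\delta(\alpha\mid\beta))=\tau_m(\delta(\beta\mid\alpha))$ makes explicit something the paper uses silently. The gap is in the second step: your concrete parameter choice does not produce a triangular system, and this cannot be repaired by a cleverer ordering.

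\emph{Failure in the orderings.} Take $Q$ to be $\beta_t$ together with all non-$\gamma$ elements above it. Then every term of the relation with $\beta_t\in S$ still fails dominance at index $t$, and your secondary measure $\sum_i\alpha_i-\sum_i\beta_i$ can increase. For example, let $k=3$ and consider $\delta(156\mid127)$. Your recipe gives $t=2$, $\gamma=\{1\}$, empty $\alpha$- and $\beta$-parameters, and $Q=\{2,5,6,7\}$. The resulting relation contains $\delta(157\mid126)$ and $\delta(167\mid125)$. Both fail at index $2$, with measures $4$ and $6$ against the original's $2$. Worse, your recipe assigns the identical relation to all three symbols $\delta(156\mid127)$, $\delta(157\mid126)$, $\delta(167\mid125)$. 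So one equation would have to eliminate three unknowns, and no induction order can rescue this.

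\emph{Failure of the size hypothesis.} Your choice can also violate the size hypothesis of Proposition~\ref{prop:narayana:flip-rels}. For $\delta(1239\mid1456)$ with $k=4$ and $t=4$, you get $|\gamma|+|\alpha|+|\beta|=1+2+2=5>k-1$.

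\emph{The missing idea.} Put the \emph{lower} part of $\beta$ into $Q$ and fix the upper part. Take
\begin{itemize}
\item $\gamma=\alpha\cap\beta$,
\item $\alpha$-parameter $\{\alpha_1,\dots,\alpha_{t-1}\}\setminus\gamma$,
\item $\beta$-parameter $\{\beta_{t+1},\dots,\beta_k\}\setminus\gamma$,
\item $Q=R_0\sqcup S_0$, where $R_0=\{\alpha_t,\dots,\alpha_k\}\setminus\gamma$ and $S_0=\{\beta_1,\dots,\beta_t\}\setminus\gamma$.
\end{itemize}
Since $\beta_i\le\beta_t<\alpha_t\le\alpha_j$, every element of $S_0$ lies below every element of $R_0$. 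So $R_0$ is the extremal splitting: for any other $R$, the sorted $R$ is entrywise at most the sorted $R_0$, and the sorted $S$ is entrywise at least the sorted $S_0$.

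This gives dominance through index $t$ for every other term:
\begin{itemize}
\item For $i<t$, the $i$-th row element is at most $\alpha_i\le\beta_i$, which is at most the $i$-th column element.
\item At $i=t$, the row set contains $\alpha_1,\dots,\alpha_{t-1}$ plus some element of $S_0$, so it has at least $t$ elements $\le\beta_t$.
\item The column set misses some element of $S_0$, so it has at most $t-1$ elements $\le\beta_t$.
\end{itemize}
Hence every other term either is standard or has a strictly larger first failure index. A plain downward induction on that index then closes the argument, as in the paper's maximal-$\ell$ argument, with no secondary measure needed.

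The size hypothesis holds for this choice. Each $c\in\gamma$ is either $<\alpha_t$, hence among $\alpha_1,\dots,\alpha_{t-1}$, or $>\beta_t$, hence among $\beta_{t+1},\dots,\beta_k$. This gives $|\gamma|+|\alpha|+|\beta|\le k-1$.
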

\begin{proof}
    Throughout, we will implicitly work modulo $\ker \tau_{m,k}$ and prove that (the modulo-$\ker \tau_{m,k}$ image of) $\tilde{\Delta}_{m,k}$ spans $\mc{P}_{m,k} / \ker \tau_{m,k}$.

    Notice that each minor symbol $\delta(\alpha \mid \beta)$ in which $1$ is not included in its row set $\alpha$ or its column set $\beta$ can be expressed as a linear combination of minor symbols whose row and column sets both include 1 (by Proposition \ref{prop:narayana:pivot-rels}). It hence remains to show that each minor symbol $\delta(\alpha \mid \beta)$ such that $1 \in \alpha \cap \beta$ and $\alpha \not\le \beta$ can be expressed as a linear combination of standard minor symbols.

    Towards a contradiction, suppose $\delta(\alpha \mid \beta) \in \Delta_{m,k}$ is a nonstandard minor symbol that cannot be expressed as a linear combination of standard minor symbols modulo $\ker \tau_{m,k}$. Write $\alpha = \{\alpha_1 < \ldots < \alpha_k\}$ and $\beta = \{\beta_1 < \ldots < \beta_k\}$, and choose $\ell$ such that $\alpha_i \le \beta_i$ for $i < \ell$ but $\alpha_\ell > \beta_\ell$. Furthermore, choose $\delta(\alpha \mid \beta)$ such that $\ell$ is maximized. Now define
    \begin{align*}
        R_0 &:= \br{\alpha_\ell, \ldots, \alpha_k} \setminus (\alpha \cap \beta) \\
        S_0 &:= \br{\beta_1, \ldots, \beta_\ell} \setminus (\alpha \cap \beta) \\
        Q &:= R_0 \cup S_0 \\
        \alpha' &:= \alpha \setminus R_0 \\
        \beta' &:= \beta \setminus S_0.
    \end{align*}
    Intuitively, $R_0$ contains all elements in $\alpha$ that are at least $\alpha_\ell$ and $S_0$ contains all elements of $\beta$ that are at most $\beta_\ell$, except that $R_0$ and $S_0$ exclude elements appearing in $\alpha \cap \beta$. Notice by Proposition \ref{prop:narayana:flip-rels} that
    \begin{align*}
        \sum_{Q = R \sqcup S} \pm \delta(\alpha' R \mid \beta' S) \in \ker \tau_{m,k},
    \end{align*}
    where the sum is over all $R$ of size $k-|\gamma|-|\alpha|$ and where the term corresponding to $Q = R_0 \sqcup S_0$ is $\delta(\alpha \mid \beta)$, up to a sign. It remains to show that for all other decompositions $Q = R \sqcup S$, the $i$-th smallest term of $\alpha' R$ is at most the $i$-th smallest term of $\beta' S$ for all $i \le \ell$; from this, it would follow by maximality of $\ell$ that $\delta(\alpha' R \mid \beta' S)$ is a linear combination of standard minor symbols and hence that $\delta(\alpha \mid \beta)$ is also such a linear combination, yielding a contradiction.

    Fix a decomposition $Q = R \sqcup S$ such that $(R,S) \neq (R_0, S_0)$ and fix $i \le \ell$. Notice that $R$ must contain some element of $S_0$. Since all elements of $S_0$ are strictly less than all elements of $R_0$, we must have that the $i$-th smallest element of $\alpha' R$ either is in $\{\alpha_1, \ldots, \alpha_i\}$ or is some element of $S_0$ smaller than $\alpha_i$. In either case, the $i$-th smallest element of $\alpha' R$ is at most $\alpha_i$. Similarly, one can show that the $i$-th smallest element of $\beta' S$ is at least $\beta_i$. Thus, the $i$-th smallest element of $\alpha' R$ is at most the $i$-th smallest element of $\beta' S$, as desired.
\end{proof}

\subsection{Initial partitions}\label{subsec:narayana:init-partitions}

In this section, we will consider the lexicographic ordering of set partitions to assign to each element of $\tau_{m,k}(\Delta_{m,k})$ an {\em initial set partition}. We will then show that the $\tau_{m,k}$-images of no two distinct standard minor symbols have the same initial set partition, from which it follows that $\tau_{m,k}(\tilde{\Delta}_{m,k})$ is independent.

\begin{definition}
    We now notate set partitions of $[m]$ using {\em minimal block numbering}; that is, we denote a set partition $A$ of $[m]$ by $(a_1, \ldots, a_m)$, where $a_1 = 1$ and $a_i \le \max(a_1, \ldots, a_{i-1}) + 1$ for $i > 1$. Notice that each set partition has a unique minimal block numering. This induces a {\em lexicographic ordering of set partitions of $[m]$}, where two partitions are lexicographically compared by their minimal block numberings viewed as elements of $\mb{Z}^m$. The {\em initial set partition} of an element $\sum_{A \in \Pi_{m,k}} c_A A$ is the lexicographically minimal set partition appearing in this sum with nonzero coefficient.
\end{definition}

Now take a standard minor symbol $\delta(\alpha \mid \beta) \in \tilde{\Delta}_{m,k}$, and let $a_i$ (resp. $b_i$) be the $i$-th smallest element of $\alpha \setminus \beta$ (resp. $\beta \setminus \alpha$). The initial set partition of $\tau_{m,k}(\delta(\alpha \mid \beta))$ has the following blocks:
\begin{enumerate}
    \item a singleton block for each $c \in (\alpha \cap \beta) \setminus 1$;
    \item a two-element block $\{a_i, b_i\}$ for each $i \le \ab{\alpha \setminus \beta}$; and
    \item a block consisting of 1 and all elements of $[m] \setminus (\alpha \cup \beta)$.
\end{enumerate}
Notice that since $\alpha \le \beta$, the smaller element in a size-2 block (not containing 1) is in $\alpha$ and the larger element is in $\beta$.

\begin{example}
    The initial set partition of $\tau_{8,4}(\delta(1235 \mid 1368))$ is $(1,2,3,1,4,2,1,4)$.
\end{example}

\begin{proposition}
    The $\tau_{m,k}$-images of distinct standard minor symbols in $\tilde{\Delta}_{m,k}$ have distinct initial set partitions.
\end{proposition}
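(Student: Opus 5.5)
The plan is to show that the initial set partition, as described just before the statement, determines the standard minor symbol $\delta(\alpha \mid \beta)$ uniquely. Since the paper asserts that description without proof, the argument has two parts. First, justify that the listed partition really is the lexicographically minimal partition with nonzero coefficient in $\tau_{m,k}(\delta(\alpha\mid\beta))$. Second, show that the map from standard symbols to these partitions is injective. I expect the first part to be the real work.

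For the first part, recall that every term of $\tau_{m,k}(\delta(\alpha\mid\beta))$ is a partition $A$ extending some MTT matching $\sigma$. Its coefficient is $\pm$ the signed count of matchings $\sigma$ with $A \succcurlyeq \sigma$.

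Fix the candidate $A_0$ from the description. Its blocks are the singletons of $(\alpha\cap\beta)\setminus 1$, the pairs $\{a_i,b_i\}$, and a block containing $1$ together with all of $[m]\setminus(\alpha\cup\beta)$.
\begin{itemize}
\item $A_0$ extends exactly one matching, namely $a_i\mapsto b_i$ (with $1$ and $\alpha\cap\beta$ fixed). Any matching $A_0$ extends must send each $a_i$ to an element of $\beta\setminus\alpha$ in its own block, and that element is $b_i$. So its coefficient is $\pm1\neq 0$.
\item For minimality, take any partition $A$ extending some MTT matching $\sigma$. Compare minimal block numberings position by position, scanning $j=1,2,\ldots$. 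An element $j\notin\alpha\cup\beta$ gets label $1$ in $A_0$, which is the least possible label.
\item An element $j\in\alpha\cup\beta$ other than $1$ lies in a block of $A$ containing a distinct element of $\alpha$ (via $\sigma$). Such elements are forced into non-$1$ blocks, since every block of $A$ contains exactly one element of $\alpha$.
\item The standard condition $\alpha\le\beta$, together with the greedy pairing $a_i\leftrightarrow b_i$, should make $A_0$ open new blocks as late as possible. In particular, each $b_i$ reuses an already-opened label, because $a_i<b_i$ has already appeared.
\end{itemize}
Making this greedy comparison rigorous is the main obstacle. I would run an induction on the first position where $A$ and $A_0$ differ. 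At that position $A$ must use a label that is at least as large; otherwise some element of $\beta\setminus\alpha$ would be matched to an element of $\alpha$ that is not the earliest unmatched one, or $A$ would merge blocks illegally. The ballot-type inequality coming from $\alpha\le\beta$ is what guarantees there is always an earlier unmatched $a$ available when we reach a $b$.

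For injectivity, I would read $\alpha$ and $\beta$ off $A_0$ directly:
\begin{itemize}
\item The block containing $1$ is $\{1\}\cup([m]\setminus(\alpha\cup\beta))$, so it recovers $\alpha\cup\beta$. Recall $1\in\alpha\cap\beta$ since the symbol is standard.
\item The singleton blocks not containing $1$ recover $(\alpha\cap\beta)\setminus 1$.
\item Each two-element block $\{x<y\}$ contributes $x$ to $\alpha\setminus\beta$ and $y$ to $\beta\setminus\alpha$. This uses the observation after the description that the smaller element lies in $\alpha$.
\end{itemize}
This reconstructs $\alpha$ and $\beta$, so distinct standard symbols give distinct initial set partitions. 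Combined with Proposition~\ref{prop:narayana:span}, this yields a basis of size $N(m,k)$ by Proposition~\ref{prop:narayana:count}.
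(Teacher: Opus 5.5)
Your injectivity argument is exactly the paper's proof. You read $\alpha\cup\beta$ off the block containing $1$, $(\alpha\cap\beta)\setminus 1$ off the other singletons, and $\alpha\setminus\beta$, $\beta\setminus\alpha$ off the smaller and larger elements of the two-element blocks not containing $1$; like the paper, this relies on the stated description of the initial set partition.

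The paper asserts that description without proof, so your first part is a useful supplement. Your first-differing-position sketch is sound once tightened. An element of $\alpha\setminus 1$ can never reuse an earlier label, because every earlier block already contains an element of $\alpha$. The element $b_i$ cannot take a label smaller than that of $a_i$, because every such block already contains $1$, a fixed point $c$, or a completed pair $\{a_{i'},b_{i'}\}$ with $i'<i$. Your other alternative, matching $b_i$ to a later unmatched $a$, only produces a larger label, so it plays no role in the contradiction.
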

\begin{proof}
    Suppose $\tau_{m,k}(\delta(\alpha \mid \beta))$ and $\tau_{m,k}(\delta(\alpha' \mid \beta'))$ have the same initial set partition $A \in \Pi_{m,k}$ for some $\delta(\alpha \mid \beta), \delta(\alpha' \mid \beta') \in \tilde{\Delta}_{m,k}$; we want to show that these two minor symbols are equal. First, notice that $\alpha \cap \beta = \alpha' \cap \beta'$, since both intersections contain 1 and since the size-1 blocks of $A$ (excluding the block containing 1 if necessary) are exactly the elements of $\alpha \cap \beta \setminus 1$ and $\alpha' \cap \beta' \setminus 1$. Furthermore, notice that $\alpha \cup \beta = \alpha' \cup \beta'$, since removing 1 from the block of $A$ containing 1 yields exactly $[m] \setminus (\alpha \cup \beta)$ and $[m] \setminus (\alpha' \cup \beta')$.
Finally, as noted above for the remaining two element blocks have their smaller element in $\alpha$ and their larger element in $\beta$, repectively in $\alpha'$ and $\beta'$, which determines where to place each element of $[m]$ and hence gives $\alpha=\alpha'$ and $\beta=\beta'$.
\end{proof}

Combined with Propositions \ref{prop:narayana:count} and \ref{prop:narayana:span}, we obtain the following.

\begin{corollary}\label{cor:narayana:basis}
    $\tau_{m,k}(\tilde{\Delta}_{m,k})$ is a basis for $\im \tau_{m,k}$, and $\tau_{m,k}$ has rank $N(m,k)$.
\end{corollary}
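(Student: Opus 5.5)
The plan is to combine three ingredients already established: the spanning statement of Proposition~\ref{prop:narayana:span}, the distinctness of initial set partitions from the preceding proposition, and the count of Proposition~\ref{prop:narayana:count}.

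First I will show that $\tau_{m,k}(\tilde{\Delta}_{m,k})$ is linearly independent in $\mc{A}_{m,k}$, using a triangularity argument with respect to the lexicographic order on minimal block numberings. Suppose $\sum_{\delta \in \tilde{\Delta}_{m,k}} c_\delta \tau_{m,k}(\delta) = 0$ with not all $c_\delta$ zero. Among the $\delta$ with $c_\delta \neq 0$, the initial set partitions $A_\delta$ are pairwise distinct by the preceding proposition. Hence there is a unique $\delta_0$ for which $A_{\delta_0}$ is lexicographically smallest among them. Every other $\tau_{m,k}(\delta)$ with $c_\delta\neq0$ has all of its partitions with nonzero coefficient lexicographically at least $A_\delta > A_{\delta_0}$. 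Therefore the coefficient of $A_{\delta_0}$ in the sum is $c_{\delta_0}$ times the nonzero coefficient of $A_{\delta_0}$ in $\tau_{m,k}(\delta_0)$. This quantity is nonzero, which is a contradiction.

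One point needs checking: the description of the initial set partition given just before the proposition must be correct. That is, the listed partition must actually appear in $\tau_{m,k}(\delta(\alpha\mid\beta))$ with nonzero coefficient, and it must be lexicographically minimal. I take this as stated in the text; if needed, it follows from the following argument. The matching $a_i\mapsto b_i$ is the order-preserving MTT matching, and its extension described above is the unique lex-minimal partition growing from any MTT matching. This is because putting every unmatched vertex into the block of $1$ minimises each entry, and the order-preserving matching minimises the block labels. So there is no cancellation between different matchings at this partition.

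Given independence, Proposition~\ref{prop:narayana:span} shows that $\tau_{m,k}(\tilde{\Delta}_{m,k})$ also spans $\im\tau_{m,k}$, so it is a basis. Distinct standard symbols have distinct images, since their initial partitions differ. The rank is therefore $|\tilde{\Delta}_{m,k}| = N(m,k)$ by Proposition~\ref{prop:narayana:count}.

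The only potential obstacle is the nonzero-coefficient and minimality check for the initial partition; the rest is formal.
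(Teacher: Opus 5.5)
Your proposal is correct and is essentially the paper's argument. You get independence of $\tau_{m,k}(\tilde{\Delta}_{m,k})$ from the triangularity given by distinct initial set partitions, spanning from Proposition~\ref{prop:narayana:span}, and the count from Proposition~\ref{prop:narayana:count}. Your explicit leading-term argument and your check of the initial-partition description are sound; that description is only asserted in the paper. The cleanest reason there is no cancellation at that partition is that any partition growing from $\delta(\alpha\mid\beta)$ determines its MTT matching, since each of its $k$ blocks contains exactly one block of the matching.
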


In particular, notice that $\tau_{m,k}$ is not surjective unless $k = m-1$, since $N(m,m-1) = \binom{m}{2} = \ab{\Pi_{m,m-1}}$. Surjectivity of $\tau_{m,m-1}$ also implies surjectivity of $\tau_{m,M}$ when $M$ is a multiset all of whose elements are $m-1$, since such a map can be viewed as a symmetric power of $\tau_{m,m-1}$.

\begin{remark}
    Since $\psi_{m,k} = \phi_{m,k} \circ \tau_{m,k}$ and $\phi_{m,k}$ is injective, $\psi_{m,k}$ and $\phi_{m,k}$ convey essentially the same information. For example, the relations described in Propositions \ref{prop:narayana:pivot-rels} and \ref{prop:narayana:flip-rels} are also elements of $\ker \psi_{m,k}$; in other words, the minors of $L_{K_m}$ satisfy not only the Plücker relations, which are quadratic and common to all square matrices, but also these families of linear relations. The results in this section thus constitute {\em combinatorial proofs} of the relations holding between the minors of symmetric nullity-1 matrices (the generic such matrices being $L_{K_m}$).

    The higher degree setting is more complicated. If one compares $\psi_{m,M}$ and $\tau_{m,M}$ when $M$ is a multiset of size at least two, one can no longer say that their kernels or ranges are isomorphic since $\phi_{m,M}$ is no longer injective in general (as evidenced by the existence of spanning forest identities).
\end{remark}

%% file: muir-pf.tex
\section{A proof of Theorem \ref{thm:mtt-det-identities:muir}}\label{header:muir-pf}

Although Muir's law of extensible minors is usually stated for generic matrices, we need the case of Laplacians of complete graphs, so we briefly give the proof of Theorem \ref{thm:mtt-det-identities:muir} here, following the approach of \cite{bs}. Note that the complete Laplacian case does not trivially follow from the generic case because we need that all minor relations of the complete Laplacian extend, not just those that are generic minor relations.

Let $n > m$, and let
 \begin{align*}
        X &= \begin{bmatrix}
            H & G \\
            F & E \\
        \end{bmatrix}
 \end{align*}
 be an $n\times n$ matrix with $E$ an invertible $(n-m) \times (n-m)$ block.  Recall that the \emph{Schur complement} of $E$ is
$H' = H - GE^{-1}F$
and 
\begin{align*}
  \begin{bmatrix}
    I_{m} & -GE^{-1} \\
    0 & I_{n-m} \\
  \end{bmatrix} \begin{bmatrix}
    H & G \\
    F & E
  \end{bmatrix} &= \begin{bmatrix}
    H' & 0 \\
    F & E
  \end{bmatrix},
\end{align*}
so for equicardinal $\alpha, \beta \subseteq [m]$, 
\begin{align*}
  \det H'(\alpha \mid \beta) &= \frac{\det X(\alpha \mid \beta)}{\det E}.
\end{align*}
In particular, for $i, j \in [m]$,
\begin{equation}\label{eq entries of H'}
  h'_{ij} = \frac{\det X([m] \setminus i \mid [m] \setminus j)}{\det E}.
\end{equation}
where $H' = [h'_{ij}]$.

Note that if $X$ has entries in $F[\mathbf{x}]$ for some set of independent indeterminates $\mathbf{x}$, then \eqref{eq entries of H'} immediately implies that $H'$ has entries in $F(\mathbf{x})$ and that if $X$ is symmetric then so is $H'$.

\begin{lemma}\label{lem sum to 0}
  Let $L$ be the Laplacian of a connected graph $G$, let $H$ be its bottom-right $(n-m)\times (n-m)$ block, and let $E$ be its top-left $m\times m$ block. Then $E$ is invertible and the Schur complement $H'$ has all rows and columns summing to $0$.
\end{lemma}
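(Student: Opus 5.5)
Throughout I will read the blocks as in the matrix $X$ above. That is, I take $E$ to be the principal submatrix of $L$ on the unmarked rows and columns $[n]\setminus[m]$, and $H$ the principal submatrix on $[m]$. With this reading the Schur complement $H' = H - GE^{-1}F$ is $m\times m$, which is how it is used in \eqref{eq entries of H'}. The argument has two independent parts: invertibility of $E$, which is the only real content, and then a short linear-algebra computation for the zero row and column sums.

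\emph{Invertibility of $E$.} Up to sign, $\det E = \det L([m]\mid[m])$. I will apply the all-minors matrix-tree theorem (Theorem~\ref{thm:mtt-det-identities:ammtt}, in the form valid for an arbitrary graph $G$ by setting absent edge variables to zero) to $\delta([m]\mid[m])$. The only MTT matching is the identity on $[m]$, so $\det L([m]\mid[m])$ is $\pm$ the spanning forest polynomial of the partial partition $(1,2,\ldots,m)$ in $G$. This polynomial is a sum of distinct monomials with coefficient $+1$, so there is no cancellation. It is therefore nonzero as soon as one such forest exists.

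A suitable forest exists because $G$ is connected. Take a spanning tree $T$ of $G$. While some component of the current forest contains two marked vertices, delete an edge on the path between them. Each deletion increases the number of components by one and never leaves a component without a marked vertex. The process ends with exactly $m$ trees, each containing exactly one marked vertex. Hence $\det E\neq 0$ in $F[E(G)]$, so $E$ is invertible over the field of fractions. I expect this step to be the main obstacle, in the sense that it is the only place where connectivity is used. A fallback is to write $E = -(L_{G - [m]} + D)$, where $D$ is a diagonal matrix recording the edges to marked vertices. One then argues that $L_{G-[m]}+D$ is positive definite for positive real edge weights, since every component of $G-[m]$ is joined to $[m]$.

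\emph{Zero row and column sums.} Let $\mathbf{1}$ denote all-ones vectors of the appropriate sizes. The identity $L\mathbf{1}=0$ splits blockwise into
\begin{align*}
H\mathbf{1} + G\mathbf{1} = 0, \qquad F\mathbf{1} + E\mathbf{1} = 0.
\end{align*}
The second equation gives $E^{-1}F\mathbf{1} = -\mathbf{1}$. Substituting this,
\begin{align*}
H'\mathbf{1} = H\mathbf{1} - GE^{-1}F\mathbf{1} = H\mathbf{1} + G\mathbf{1} = 0,
\end{align*}
so every row of $H'$ sums to $0$. For the columns, either repeat the computation with $\mathbf{1}^{T}L = 0$, or note that $L$ is symmetric, hence $H'$ is symmetric as remarked after \eqref{eq entries of H'}, and column sums equal row sums.
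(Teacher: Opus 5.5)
Your proof is correct. Your reading of the blocks ($E=L([m]\mid[m])$, the principal submatrix on $[n]\setminus[m]$) is the one the paper's own proof uses, despite the swapped labels in the statement. Your invertibility argument is also the paper's: $\det E$ is, up to sign, the cancellation-free spanning forest polynomial of $(1,2,\ldots,m)$, and it is nonzero because a suitable forest exists. You just supply the pruning argument for existence that the paper leaves implicit.

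You genuinely diverge on the zero-sum part. The paper computes each entry of $H'$ via \eqref{eq entries of H'} and the all-minors matrix-tree theorem. It writes each off-diagonal $h'_{ij}$ as a signed ratio $\Phi_m^{(G)}(A_{ij})/\Phi_m^{(G)}((1,2,\ldots,m))$, and each $h'_{ii}$ as the oppositely signed sum of these ratios over $j\neq i$. This needs a parity computation for $\eta([m]\setminus i,[n])+\eta([m]\setminus j,[n])+\eta(\sigma_{ij})$. You instead split $L\mathbf{1}=0$ into blocks, get $E^{-1}F\mathbf{1}=-\mathbf{1}$, and conclude $H'\mathbf{1}=0$ in one line.

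Your route is shorter, sign-free, and uses nothing about $L$ beyond its zero row (and column) sums and the invertibility of $E$. The paper's route gives more: an explicit combinatorial description of the entries of $H'$ as ratios of spanning forest polynomials. However, Lemma~\ref{lem:muir-pf:workhorse} only needs symmetry and the zero sums.

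Being sign-free is a real advantage here. With the convention $(L_G)_{ii}=-\sum_k x_{ik}$, $\det E$ is $(-1)^{n-m}$ times the forest polynomial, so your ``up to sign'' is exactly right. As a result, the paper's entrywise formulas are correct only up to a common global sign. That is harmless for the row sums, but it is precisely the bookkeeping your argument never has to do.
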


\begin{proof}
  First, notice that $E = L([m] \mid [m])$, so $\det E$ is the sum of all spanning forests of a connected graph with $m$ trees in which each element of $[m]$ is in a distinct tree. In particular, at least one such forest exists, so $\det E$ is non-zero. Thus, $E$ is invertible.

  As observed before the statement of the lemma, $H'$ is symmetric since $L$ is and so it suffices to verify that the rows of $H'$ sum to $0$.  By \eqref{eq entries of H'} and the all-minors matrix-tree theorem, we have that for distinct $i, j \in [m]$,
    \[
        h'_{ij} = \frac{\det L([m] \setminus i \mid [m] \setminus j)}{\det E} 
                = \frac{\eps([m] \setminus i, [n]) \eps([m] \setminus j, [n]) \eps(\sigma_{ij}) \Phi_m^{(G)}(A_{ij})}{\Phi_m^{(G)}((1,2,\ldots,m))},
    \]
    where $\sigma_{ij}$ is the matching $[m] \setminus i \to [m] \setminus j$ that sends $j$ to $i$ and fixes all other elements, and $A_{ij}$ is the set partition of $[m]$ with $m-1$ blocks in which $i$ and $j$ share a block. Furthermore, notice that for $i \in [m]$, $\det L([m] \setminus i \mid [m] \setminus i)$ is the spanning forest polynomial with $m-1$ trees in which each element of $[m]\setminus i$ is in a distinct tree.  So, extending the set partitions, we have
    \[
        h'_{ii} = \frac{\det L([m] \setminus i \mid [m] \setminus i)}{\det E} 
                = \frac{\sum_{j \in [m] \setminus i} \Phi_m^{(G)}(A_{ij})}{\Phi_m^{(G)}((1,2,\ldots,m))}.
    \]
    Notice that for distinct $i, j \in [m]$, we have that modulo 2,
    \begin{align*}
        \eta([m] \setminus i, [n]) + \eta([m] \setminus j, [n]) + \eta(\sigma_{ij}) &= \binom{m+1}{2} - i + \binom{m+1}{2} - j + (i-j+1) \equiv 1,
    \end{align*}
    which implies
    \begin{align*}
        \eps([m] \setminus i, [n]) \eps([m] \setminus j, [n]) \eps(\sigma_{ij}) = -1.
    \end{align*}
    Thus, for each $i \in [m]$,
    \begin{align*}
        \sum_{j \in [m]} h'_{ij} &= \sum_{j \in [m] \setminus i} h'_{ij} + h'_{ii} = 0.
    \end{align*}
  
\end{proof}

Write $L_n$ for the Laplacian of $K_n$.

\begin{lemma}\label{lem:muir-pf:workhorse}
    Let $E$ and $H$ be as in Lemma~\ref{lem sum to 0} for $G=K_n$, and write $H'=[h'_{ij}]$.
    There exists a ring homomorphism $F[E(K_m)] \to F(E(K_n))$ that maps the $(i,j)$-th entry of $L_m$ to $h'_{ij}$.
\end{lemma}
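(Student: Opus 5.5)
The map is defined on generators and extended as an $F$-algebra homomorphism. $F[E(K_m)]$ is the polynomial ring in the independent indeterminates $x_{ij}$, $1\le i<j\le m$. So I send $x_{ij}\mapsto h'_{ij}$ for distinct $i,j\in[m]$; this is well defined because $H'$ is symmetric, so $h'_{ij}=h'_{ji}$. The target is the right one: by \eqref{eq entries of H'}, each $h'_{ij}$ is a ratio of polynomials in the edge variables of $K_n$ with nonzero denominator $\det E$ (Lemma~\ref{lem sum to 0}), hence lies in $F(E(K_n))$. The universal property of polynomial rings then gives a unique ring homomorphism $\rho: F[E(K_m)]\to F(E(K_n))$ with $\rho(x_{ij})=h'_{ij}$.

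This handles the off-diagonal entries: the $(i,j)$ entry of $L_m$ is $x_{ij}$, which $\rho$ sends to $h'_{ij}$. The diagonal entries then follow from Lemma~\ref{lem sum to 0}. The $(i,i)$ entry of $L_m$ is $-\sum_{j\in[m]\setminus i}x_{ij}$, so
\[
\rho\bigl((L_m)_{ii}\bigr) = -\sum_{j\in[m]\setminus i} h'_{ij} = h'_{ii},
\]
where the last equality uses that the $i$-th row of $H'$ sums to $0$. So no extra constraint appears on the diagonal.

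The only point needing care is the block convention. Lemma~\ref{lem sum to 0} as stated calls the $(n-m)\times(n-m)$ block $H$ and the $m\times m$ block $E$, which is the reverse of the Schur-complement setup above it. I would read it so that $H'$ is the $m\times m$ Schur complement on the marked indices $[m]$, complementing the block on the unmarked vertices $[n]\setminus[m]$. That block is $L_{K_n}([m]\mid[m])$, and its determinant is nonzero because it counts spanning forests of the connected graph $K_n$ in which each vertex of $[n]\setminus[m]$ lies in a distinct tree. With this reading $H'$ is $m\times m$ and indexed by $[m]$, so comparing it entrywise with $L_m$ makes sense and the row-sum property from the lemma applies to it. Once the indexing is settled there is no real obstacle: the whole argument is the freeness of the polynomial ring plus the row-sum property.
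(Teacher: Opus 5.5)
Your proof is correct and follows the paper's argument: send the free generators $x_{ij}\mapsto h'_{ij}$, use symmetry of $H'$ for the entries below the diagonal, and use the zero row sums from Lemma~\ref{lem sum to 0} to match the diagonal entries $-\sum_{j\ne i}x_{ij}$. You also resolve the block-convention issue the same way the paper's proof does, with $E=L_{K_n}([m]\mid[m])$ as the bottom-right block.

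One small slip in your aside: by the all-minors matrix-tree theorem, $\det L_{K_n}([m]\mid[m])$ is (up to sign) the sum over spanning forests with $m$ trees, each containing exactly one vertex of $[m]$. It is not the sum over forests separating the vertices of $[n]\setminus[m]$; that describes the complementary $m\times m$ minor. The nonvanishing you need is unaffected, and you already cite it from Lemma~\ref{lem sum to 0}.
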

\begin{proof}
    The Laplacian of a complete graph is the generic symmetric matrix with all row and column sums equal to 0. In particular, up to overall sign, the entries above the main diagonal are independent indeterminates and the others determined by those conditions.  Lemma~\ref{lem sum to 0} and the observations above it tell us that $H'$ is also symmetric with entries in $F(E(K_n))$ and with all rows and columns summing to $0$ and so the map $x_{ij}\mapsto h'_{ij}$ is the desired homomorphism.
\end{proof}

\begin{proof}[Proof of Theorem \ref{thm:mtt-det-identities:muir}]
    Suppose we have a homogeneous element of $\ker \Psi_m$; write it as
    \begin{align*}
        \sum_{q=1}^{s} c_q \prod_{i=1}^{t} \delta\pr{\alpha^{(q,i)} \mid \beta^{(q,i)}}.
    \end{align*}
    So, by definition of $\ker \Psi_m$,
    \begin{align*}
        0 &= \sum_{q=1}^{s} c_q \prod_{i=1}^{t} \det L_m \pr{\alpha^{(q,i)} \mid \beta^{(q,i)}}.
    \end{align*}
    Applying the map from Lemma \ref{lem:muir-pf:workhorse} and letting $H'$ be the Schur complement of the bottom-right $(n-m)$-by-$(n-m)$ block of $L_n$, we see that
    \begin{align*}
        0 = \sum_{q=1}^{s} c_q \prod_{i=1}^{t} \det H' \pr{\alpha^{(q,i)} \mid \beta^{(q,i)}} 
          = \sum_{q=1}^{s} c_q \prod_{i=1}^{t} \frac{\det L_n \pr{\alpha^{(q,i)} \mid \beta^{(q,i)}}}{\det L_n([m] \mid [m])} \tag{by \eqref{eq entries of H'}}
    \end{align*}
    Multiplying both sides by $\det L_n([m] \mid [m]))$, we see that our element is in the kernel of $\Psi_n \circ \mu_m^{(n)}$ as well.
\end{proof}

The same proof works for the generic symmetric case but is strictly simpler, as the analogue of Lemma~\ref{lem:muir-pf:workhorse} follows directly from the initial elementary observations on the symmetry of $H'$, without needing Lemma~\ref{lem sum to 0}.